               
\documentclass[12pt,makeidx]{amsart}



\usepackage{amssymb}
\usepackage{txfonts}

\usepackage{nag}

\usepackage{enumerate,color}
\usepackage[pagebackref,colorlinks,linkcolor=red,citecolor=blue,urlcolor=blue,hypertexnames=true]{hyperref}
\usepackage{url}

\renewcommand{\subset}{\subseteq}
\setcounter{tocdepth}{3}

\linespread{1.176}

\textwidth = 6.5 in 
\textheight = 8.5 in 
\oddsidemargin = 0.0 in 
\evensidemargin = 0.0 in
\topmargin = 0.0 in
\headheight = 0.0 in
\headsep = 0.3 in
\parskip = 0.05 in
\parindent = 0.3 in


\newtheorem{theorem}            {Theorem}[section]
\newtheorem{corollary}          [theorem]{Corollary}
\newtheorem{proposition}        [theorem]{Proposition}

\newtheorem{lemma}              [theorem]{Lemma}
\newtheorem{remark}         [theorem]{Remark}


\newcommand{\df}[1]{{\bf{#1}}{\index{#1}}}


\def\cFc{\mathfrak F_c}
\def\bZ{\mathbb Z}
\def\tT{{\tt T}}
\def\tH{{\tt H}}
\def\tB{{\tt B}}
\def\ttJ{{\tt J}}

\def\obD{\overline{\mathbb D}}

\def\bZ{\mathbb Z}
\def\cK{\mathcal K}
\def\cP{\mathcal P}

\def\cJ{\mathcal J}
\def\cH{\mathcal H}
\def\cE{\mathcal E}
\def\cF{\mathcal F}

\def\ol{\overline{\lambda}}

\def\tN{\tilde{N}}
\def\tJ{\tilde{J}}
\def\tW{\tilde{W}}
\def\tE{\tilde{E}}

\def\cY{\mathcal Y}
\def\cV{\mathcal V}
\def\fU{\mathfrak{U}}
\usepackage{color}
\usepackage{tikz}


\makeindex

\begin{document}

\title[Lifting $3$-isometries]{A lifting theorem for $3$-isometries}

\author[McCullough]{Scott McCullough${}^*$}
\address{Scott McCullough, Department of Mathematics\\
  University of Florida, Gainesville 
   }
   \email{sam@math.ufl.edu}
\thanks{${}^*$ Partially supported by NSF grant  DMS-1101137}

\author[Russo]{Benjamin Russo}
 \address{Benjamin Russo, Department of Mathematics\\
  University of Florida, Gainesville 
   }
   \email{russo5@math.ufl.edu}

\subjclass[2010]{47A20 (Primary) 47B37, 47B38  (Secondary)}

\keywords{$3$-symmetric operators, $3$-isometric operators, Non-normal spectral theory}

\maketitle

\begin{abstract}
  An operator $T$ on Hilbert space is a $3$-isometry if 
  there exists operators $B_1$ and $B_2$ such that
  $T^{*n}T^n=I+nB_1+n^2B_2.$   An operator $J$ is
  a Jordan operator if it the sum of a unitary $U$
 and nilpotent $N$ of order two which commute. If $T$ is a $3$-isometry and $c>0,$
  then  $I-c^{-2} B_2 + sB_1 + s^2B_2$ is positive semidefinite for all 
 real $s$ if and only if  it is the restriction
  of a Jordan operator $J=U+N$ with the norm of $N$ at most $c.$ 
  As a 
  corollary, an analogous result for $3$-symmetric operators,
  due to Helton and Agler, is recovered. 
\end{abstract}

\section{Introduction}
  Let $B(H)$ denote the bounded operators
  on the (complex) Hilbert space $H$.  An operator $T\in B(H)$ is 
 a \df{$3$-isometry} if \index{$T$ a $3$-isometry}
\[
  T^{*3} T^3 -3 T^{*2} T^2 + 3 T^* T - I =0.
\]
  Equivalently $T$ is a $3$-isometry if and only if
  there exist operators $B_1(T^*,T)$ and $B_2(T^*,T)$ such that \index{$B_j(T^*,T)$}
  for all natural numbers $n$
\begin{equation}
 \label{eq:3iso}
  T^{*n}T^n = I + n B_1(T^*,T) + n^2 B_2(T^*,T).
\end{equation}
  In this case it is straightforward to verify that
\begin{equation}
 \label{eq:B2}
    2 B_2(T^*,T) = T^{*2}T^2-2T^{*}T + I
\end{equation}
 and
\begin{equation}
 \label{eq:B1}
 2 B_1(T^*,T) = -T^{*2}T^2 + 4T^{*}T-3.
\end{equation}
Evidently, each $B_j(T^*,T)$ is selfadjoint.
  
 From  Equation \eqref{eq:3iso},
  it is evident that that $\|T^n\|^2$ is bounded by 
  a quadratic in $n$.  It follows from the spectral radius 
  formula that the spectrum of $T,$ denoted $\sigma(T)$,  is a subset of $\obD,$
 the closed unit disc.  If $T$ is invertible, then Equation \eqref{eq:3iso} holds
  for all integers $n$. In particular, $T^{-1}$ is also
  a $3$-isometry and hence $\sigma(T^{-1}) \subset \obD$.
  Thus, in this case, $\sigma(T)$ is a subset of the
  unit circle.  As will be seen later, if 
  $T$ is not invertible, then in fact $\sigma(T)=\obD$
  \cite{aglerstankusi}
  (or see Lemma \ref{lem:specTnotinv}).

  Likewise, an operator $\tT$ on a Hilbert space $\tH$ 
  is a \df{$3$-symmetric operator} \index{$\tT$ a $3$-symmetric operator}\index{$\tH$ Hilbert space $\tT$ acts on}
  if there exists operators $\tB_j(\tT^*,\tT)$ on $\tH$ such that 
  \begin{equation}
 \label{eq:3sym}
 \exp(-is\tT^*) \exp(is\tT) = I + s \tB_1(\tT^*,\tT) + s^2 \tB_2(\tT^*,\tT)
\end{equation}
 for all real $s$.  Evidently, if $\tT$ is $3$-symmetric, then
  $T=\exp(i\tT)$ is a $3$-isometry.   Helton introduced
  $3$-symmetric operators as both a generalization of
  selfadjoint operators and as a class of non-Normal operators
  for which a viable spectral theory exists. In a 
  series of papers (\cite{heltonbulletin} \cite{heltonpaper} \cite{heltonpapererata})  Helton 
  modeled these operators as multiplication $t$ on 
  a Sobolev space and showed, under some additional
  hypotheses,  that they are the restriction to
  an invariant subspace of a Jordan operator (of order two) as explained
  below. In \cite{aglerthreesym} the 
  connection between Jordan operators and $3$-symmetric
 operators was established in general.   See also the
  references 
 \cite{ballhelton} \cite{aglerstankusi} \cite{aglerstankusii} \cite{aglerstankusiii}
  \cite{richter} \cite{mccullough}.
 

  Given a positive number $c$, let $\cFc$ denote those \index{$c$ a parameter for the class $\cFc$}
  $3$-isometries $T$ such that \index{$\cFc$ those $3$-isometries satisfying \eqref{eq:Q}}
   the quadratic 
\begin{equation}
 \label{eq:Q}
  Q(T,s) = [I-\frac{1}{c^2} B_2(T^*,T)] + sB_1(T^*,T) +s^2 B_2(T^*,T) 
\end{equation}
 is positive semidefinite for all real numbers $s$. 
 It turns out there are $3$-isometries which do not
  belong to any of these classes. \index{$U$ a unitary}
 If $U$ is a unitary operator on the Hilbert space $\cF$, then the \df{Jordan operator}\index{$J$ a Jordan operator as in \eqref{eq:Jordanc}}
\begin{equation}
 \label{eq:Jordanc}
  J= \begin{pmatrix} U  & c U \\ 0  & U \end{pmatrix}
\end{equation}
 acting on $K=\cF\oplus \cF$ is a canonical member of the class $\cFc$.  
 Indeed, it is readily verified that \index{$Q(T,s)$}
\[
 Q(J,s) = \begin{pmatrix} I & scI \\ scI & s^2 c^2 I\end{pmatrix} \succeq 0.
\]
  Here, for an operator $A$ on Hilbert space, $A\succeq 0$ means $A$ is
 positive semidefinite. 
 Moreover, if $T$ is an operator  acting on the Hilbert space  $H$ and 
  there  is an isometry $V:H\to K$ such that 
  $VT=JV$, then $ T^{*n}T^n = V^* J^{*n}J^n V$ for all $n$.
  It follows that $T$ is a $3$-isometry and further,
\[
 Q(T,s) = V^* Q(J,s)V \succeq 0
\]
 so that $T\in\cFc$.  The following is the main result of this article.

\begin{theorem}[$3$-isometric lifting theorem]
 \label{thm:main}
   An operator $T$ on a Hilbert space $H$ is in the class
  $\cFc$ if and only if there is an operator $J$ of
  the form in Equation \eqref{eq:Jordanc} acting on
  a Hilbert space $K$ and an isometry $V:H\to K$ such that
 $VT=JV$.

  If $T$ is invertible, then, necessarily, $VT^{-1} = J^{-1}V.$
  Moreover, in this case,  the spectrum of $T$  is a subset of the
  unit circle, and $J$ can be chosen so that 
  $\sigma(T)=\sigma(J)$. 
\end{theorem}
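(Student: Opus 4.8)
The ``if'' direction is the computation recorded just before the statement, so only the converse (and the supplementary assertions) remain. Writing $J$ as in \eqref{eq:Jordanc} and $V=\bigl(\begin{smallmatrix}V_1\\ V_2\end{smallmatrix}\bigr)$ with $V_1,V_2\colon H\to\mathcal F$, the requirements that $V$ be an isometry and $VT=JV$ unwind (block multiplication) to
\[
 V_2T=UV_2,\qquad V_1T=U(V_1+cV_2),\qquad (V_1+scV_2)^*(V_1+scV_2)=Q(T,s)\ \ (s\in\mathbb R),
\]
the last identity being the coefficientwise form of $V_1^*V_1+V_2^*V_2=I$, $V_1^*V_2+V_2^*V_1=c^{-1}B_1(T^*,T)$, $V_2^*V_2=c^{-2}B_2(T^*,T)$ (so in particular $V_1^*V_1=I-c^{-2}B_2(T^*,T)=Q(T,0)\succeq0$, which is legitimate since $T\in\cFc$). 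Two algebraic facts drive the argument: conjugating \eqref{eq:3iso} by $T$ and comparing powers of $n$ gives $T^*B_2(T^*,T)T=B_2(T^*,T)$ and $T^*B_1(T^*,T)T=B_1(T^*,T)+2B_2(T^*,T)$, hence, by \eqref{eq:Q}, the covariance $T^*Q(T,s)T=Q(T,s+1)$ for all real $s$, and in particular $T^*Q(T,-1)T=Q(T,0)$.

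\textbf{The crux.} Because $Q(T,s)\succeq0$ for \emph{every} real $s$, the quadratic operator pencil $s\mapsto Q(T,s)$ factors through an affine map: there are a Hilbert space $\mathcal L$ and $G_0,G_1\colon H\to\mathcal L$ with
\[
 Q(T,s)=(G_0+sG_1)^*(G_0+sG_1)\qquad(s\in\mathbb R),
\]
i.e.\ $G_0^*G_0=I-c^{-2}B_2(T^*,T)$, $G_0^*G_1+G_1^*G_0=B_1(T^*,T)$, $G_1^*G_1=B_2(T^*,T)$. This operator Fej\'er--Riesz-type factorization --- in which positivity of $Q(T,s)$ for \emph{all} real $s$, not merely a discrete set, is indispensable --- I expect to be the main obstacle. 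One wants moreover a ``canonical'' (outer, minimal) such factorization; then, since $s\mapsto(G_0+sG_1)T$ is also an affine factorization of $T^*Q(T,s)T=Q(T,s+1)$, uniqueness of the canonical factorization up to a left unitary yields an isometry $\Omega$ (between the relevant minimal spaces) with $G_1T=\Omega G_1$ and $G_0T=\Omega(G_0+G_1)$. Setting $V_1:=G_0$ and $V_2:=c^{-1}G_1$ one then obtains, besides the three Gram identities, the extra relation $T^*(V_1^*V_2)T=V_1^*V_2+c^{-1}B_2(T^*,T)$ (take inner products in $G_0T=\Omega(G_0+G_1)$ and $G_1T=\Omega G_1$); equivalently the self-adjoint $Y$ defined by $V_1^*V_2=\tfrac1{2c}B_1(T^*,T)+iY$ satisfies $T^*YT=Y$. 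Showing that the canonical factorization of $Q(T,\cdot)$ is compatible with conjugation by $T$ --- equivalently, producing a $T$-fixed self-adjoint $Y$ making the $2\times2$ operator matrix of the Gram data positive semidefinite --- is the heart of the proof.

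\textbf{Constructing $U$.} On $\operatorname{span}\{V_1x,\,V_2x:x\in H\}\subseteq\mathcal L$ define $U_0$ by $U_0V_2x=V_2Tx$ and $U_0V_1x=(V_1-cV_2)Tx$. Using $T^*B_2(T^*,T)T=B_2(T^*,T)$, the identity $T^*Q(T,-1)T=Q(T,0)$, and $T^*YT=Y$, one checks that $U_0$ is well defined and isometric on its domain (these are precisely the $V_2$--$V_2$, $V_1$--$V_1$ and $V_1$--$V_2$ parts of the relevant inner-product computation). If $T$ is invertible, surjectivity of $T$ makes $U_0$ a unitary of $\mathcal F:=\overline{\ran V_1+\ran V_2}$ onto itself; in general, let $U$ be the minimal unitary extension of the isometry $U_0$ on a Hilbert space $\mathcal F\supseteq\mathcal L$, so that $V_1,V_2$ still map into $\mathcal F$. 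Iterating the defining relations gives $V_1T^n=U^n(V_1+ncV_2)$ and $V_2T^n=U^nV_2$; consequently $V=\bigl(\begin{smallmatrix}V_1\\ V_2\end{smallmatrix}\bigr)$ is an isometry with $VT=JV$ for $J$ of the form \eqref{eq:Jordanc}, which completes the converse.

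\textbf{The invertible case.} If $T$ is invertible then $J$ --- a unitary plus a commuting nilpotent of order two --- is invertible, so $VT=JV$ gives $J^{-1}V=VT^{-1}$ at once; and $\sigma(T)\subseteq\mathbb T$ by the spectral radius formula applied to $T$ and $T^{-1}$ (cf.\ Lemma~\ref{lem:specTnotinv}). For $\sigma(J)=\sigma(T)$ take the minimal $\mathcal F$ and prove both inclusions. First, $V(H)$ is invariant for $J$ and for $J^{-1}$, so for each $v$ the analytic $\mathcal F$-valued function $\lambda\mapsto(J-\lambda)^{-1}v$ lies in the closed subspace $V(H)$ for $|\lambda|$ large (invariance under $J$) and for $|\lambda|$ small (invariance under $J^{-1}$), hence, by analyticity, throughout $\mathbb C\setminus\sigma(J)$; thus $V(H)$ is invariant for every $(J-\lambda)^{-1}$ and $\sigma(T)=\sigma(J|_{V(H)})\subseteq\sigma(J)$. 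Conversely, for $|\lambda|\neq1$ one has $(U-\lambda)^{-1}V_2x=V_2(T-\lambda)^{-1}x$ and $(U-\lambda)^{-1}V_1x=V_1(T-\lambda)^{-1}x+cV_2(T-\lambda)^{-2}Tx$, whose right-hand sides extend analytically across $\mathbb C\setminus\sigma(T)$; since the vectors $V_jx$ generate $\mathcal F$ under the unitary $U$, Stieltjes inversion forces their scalar spectral measures into $\sigma(T)$, so $\sigma(J)=\sigma(U)\subseteq\sigma(T)$. Hence $\sigma(J)=\sigma(T)$.
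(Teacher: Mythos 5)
Your reduction of the problem is sound (the Gram identities, the covariance $T^*Q(T,s)T=Q(T,s+1)$, and the verification that $U_0$ would be isometric granted the extra relation $T^*YT=Y$ are all correct), but the step you yourself label ``the heart of the proof'' is exactly the part that is missing, and it cannot be waved through. Rosenblum--Rovnyak gives \emph{some} factorization $Q(T,s)=(G_0+sG_1)^*(G_0+sG_1)$, but your argument needs a factorization \emph{compatible} with conjugation by $T$, i.e.\ a single constant isometry $\Omega$ with $G_1T=\Omega G_1$ and $G_0T=\Omega(G_0+G_1)$, equivalently a selfadjoint $Y$ with $T^*YT=Y$ making the $2\times 2$ Gram matrix positive. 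Uniqueness of outer factorizations only relates two factorizations of $Q(T,\cdot+1)$ by an isometry when \emph{both} are outer; you would have to prove that $s\mapsto(G_0+sG_1)T$ is again outer, which is not addressed and at best could be hoped for when $T$ is invertible. Nor can one simply take $Y=0$: pointwise positivity of the pencil $B_0+sB_1+s^2B_2$ does not imply positivity of the block matrix $\begin{pmatrix}B_0& \tfrac12 B_1\\ \tfrac12 B_1& B_2\end{pmatrix}$ for operator coefficients (it gives $|\langle B_1x,x\rangle|^2\le 4\langle B_0x,x\rangle\langle B_2x,x\rangle$ only on the diagonal $x=y$). So the proposal identifies the obstacle but does not overcome it. The paper circumvents it entirely: it uses the factorization only to prove that $\cJ^{*\alpha}\cJ^\alpha\mapsto T^{*\alpha}T^\alpha$ is \emph{completely positive} (Lemma \ref{lem:factor}), then Agler's symmetrization (Proposition \ref{prop:sym}) and the Arveson/Stinespring extension (Theorem \ref{thm:arvext}) produce the intertwining isometry, and Lemma \ref{lem:invunderpi} shows the image of $\cJ$ under the Stinespring representation is again of the form \eqref{eq:Jordanc}; the compatibility you are missing is manufactured by Stinespring rather than by hand.

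A second genuine gap is the non-invertible case. Your ``let $U$ be the minimal unitary extension of $U_0$'' presupposes that $U_0$ is well defined and isometric, which depends on the very relation ($T^*YT=Y$) whose proposed derivation through outer uniqueness would, if anything, require $T$ invertible (right multiplication by a non-invertible $T$ destroys outerness/minimality). The paper treats this by a separate, substantial construction (Proposition \ref{lem:lifttoinv}): a GNS-type dilation of $T$ to an \emph{invertible} member of $\cFc$ built from the forms $\langle Q_+(T,n)\,\cdot,\cdot\rangle$, with the norm estimates of Lemma \ref{lem:omnibus} needed for boundedness and bounded invertibility of the dilation; nothing in your outline replaces this. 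Your treatment of the supplementary assertions is closer to the mark: $VT^{-1}=J^{-1}V$ and $\sigma(T)\subset\mathbb T$ are fine, $\sigma(T)\subset\sigma(J)$ via invariance of $V(H)$ under the resolvents is essentially the paper's argument, and the inclusion $\sigma(J)\subset\sigma(T)$ via intertwining of resolvents plus local analytic extension of $(U-\lambda)^{-1}V_\ell x$ is a plausible alternative to the paper's argument with the functions $f^n$ and spectral projections (Lemmas \ref{lem:Ben0}--\ref{lem:Ben2}), but as written it is a sketch and, more importantly, it rests on the unestablished construction of $V_1,V_2,U$ in the first place.
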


  The $3$-symmetric lifting Theorem of Helton and Agler is
 fairly easily seen to be a consequence of Theorem \ref{thm:main}.
 The details are in Section \ref{sec:3symmetrics}. 
  The proof of Theorem \ref{thm:main} for the case
  that $T$ is invertible uses Arveson's complete
 positivity machinery in the form of a version of the Arveson 
  Extension Theorem  and operator-valued Fejer-Riesz Factorization.
  The proof of the Arveson Extension Theorem along with the
  needed background on
  the  theory of completely positive maps appears in 
  Section \ref{sec:arv} gives the 
  The proof of the lifting theorem for invertible
  $3$-isometries appears in Section \ref{sec:inv}. 
  The reduction of the general case of Theorem \ref{thm:main}
  to the invertible case is the topic of Section \ref{sec:notinv}.
  A functional calculus argument establishes the spectral condition, $\sigma(T)=\sigma(J)$,
  in Section \ref{sec:spectral}.

\section{Completely Positive Maps and the Arveson Extension Theorem}
 \label{sec:arv}
  In this section some of  Agler's hereditary calculus machinery based upon
  the Arveson Extension Theorem is reviewed. Let $n$ and $N$
  be given positive integers.  An \df{hereditary polynomial} $p(x,y)$ 
  of size $n$ and degree at most $N$ 
  in noncommuting (invertible) variables $x$ and $y$ is a polynomial 
  of the form \index{$p$ an hereditary polynomial}
\begin{equation}
 \label{eq:hpoly}  
  p(x,y) = \sum_{\alpha,\beta=-N}^N p_{\alpha,\beta} y^\alpha x^\beta.
\end{equation}
  Here the sum is finite and the $p_{\alpha,\beta}$ are $n\times n$
  matrices over $\mathbb C$.  Such a polynomial is evaluated at an
  invertible  operator $T$ by
\[
  p(T^*,T) = \sum p_{\alpha,\beta}\otimes  T^{*\alpha} T^\beta.
\]
  Let $\cP_n$ denote the collection \index{$\cP_n$ - $n\times n$ matrix hereditary polys}
 of hereditary polynomials of size $n$ and 
  let $\cP = (\cP_n)_n$ denote the collection of all  hereditary polynomials.
 \index{$\cP$ - Hereditary polys of all sizes}

Given an operator $T$, let $\cH(T)$ \index{$\cH(T)$ the span of $\{T^{*\alpha}T^\beta\}$} denote the
  span by $\{T^{*\alpha}T^\beta: \alpha,\beta\in\mathbb Z\}.$
  Given an operator $J$ on the Hilbert space $K$, if $p(J^*,J)\succeq 0$ implies $p(T^*,T)\succeq 0$,
  then the mapping $\rho:\cH(J)\to \cH(T)$ given by
  $\rho(p(J,J^*))= p(T^*,T)$ is well defined. Let $M_n$ denote the $n\times n$ matrices.
  If in addition, for each $n$ the mapping
 $1_n\otimes \rho: M_n\otimes \cH(J)\to M_n\otimes \cH(T)$ obtained by applying $\rho$
  entry-wise is positive, then $\tau$ is \df{completely positive}.

\begin{theorem}[Arveson Extension Theorem \cite{aglerhereditary}]
 \label{thm:arvext}
  Suppose $T$ and $J$ are invertible operators on Hilbert spaces $H$ and $K$ respectively.
  There is a Hilbert space $\cK,$
  a representation $\pi:B(K)\to B(\cK)$ and an isometry $V:H\to \cK$ such that $VT^j = \pi(J)^j V$ for all $j\in\mathbb Z$  if 
  and only if the mapping $\rho:\cH(J)\to \cH(T)$ is completely positive. 
\end{theorem}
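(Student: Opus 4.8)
The plan is to prove the two implications separately. The forward implication --- that a dilation yields complete positivity --- is a direct computation: if $\cK$, $\pi$, and $V$ are as in the statement, then, since $\pi$ is a $*$-representation and $V$ is an isometry, the relations $VT^j=\pi(J)^jV$ give
\[
 T^{*\alpha}T^\beta = (VT^\alpha)^*(VT^\beta) = V^*\pi(J)^{*\alpha}\pi(J)^\beta V = V^*\pi(J^{*\alpha}J^\beta)V
\]
for all $\alpha,\beta\in\mathbb Z$, and hence $\rho(p(J^*,J))=V^*\pi(p(J^*,J))V$ for every hereditary polynomial $p$. As $\pi$ is completely positive (being a $*$-representation) and $x\mapsto V^*xV$ is completely positive, their composition $\rho$ is completely positive.

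For the converse the plan is to reduce to the classical Arveson extension and Stinespring dilation theorems (developed earlier in this section) and then to run a short multiplicative-domain argument. First, $\rho$ is unital, since $\rho(I)=\rho(J^{*0}J^0)=T^{*0}T^0=I$, and a unital completely positive map on an operator system is completely contractive; thus $\rho$ extends by continuity to the norm closure of $\cH(J)$ in $B(K)$, remaining unital and completely positive. By the Arveson extension theorem there is a unital completely positive map $\tilde\rho\colon B(K)\to B(H)$ restricting to $\rho$ on $\cH(J)$, and the Stinespring construction applied to $\tilde\rho$ yields a Hilbert space $\cK$, a unital $*$-representation $\pi\colon B(K)\to B(\cK)$, and a bounded operator $V\colon H\to\cK$ with $\tilde\rho(x)=V^*\pi(x)V$ for all $x$; unitality forces $V^*V=\tilde\rho(I)=I$, so $V$ is an isometry, and restricting to $\cH(J)$ gives $V^*\pi(J^{*\alpha}J^\beta)V=T^{*\alpha}T^\beta$ for all $\alpha,\beta\in\mathbb Z$.

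The remaining, and crucial, step is to promote these moment identities to the intertwining relations $VT^j=\pi(J)^jV$. Put $P=I-VV^*$, a projection since $V$ is an isometry. Using the multiplicativity of $\pi$ and the identities above for $(\alpha,\beta)\in\{(1,0),(0,1),(1,1)\}$, one computes
\[
 (P\pi(J)V)^*(P\pi(J)V) = V^*\pi(J^*J)V - \big(V^*\pi(J^*)V\big)\big(V^*\pi(J)V\big) = T^*T - T^*T = 0,
\]
so $P\pi(J)V=0$, that is $\pi(J)V = VV^*\pi(J)V = VT$. Now invertibility of $J$ and $T$ takes over: since $\pi$ is unital, $\pi(J)$ is invertible in $B(\cK)$ with inverse $\pi(J^{-1})$, and from $\pi(J)V=VT$ one obtains $\pi(J)^{-1}V=VT^{-1}$; a routine induction on $|j|$ then yields $VT^j=\pi(J)^jV$ for every $j\in\mathbb Z$.

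I expect the only genuinely delicate point in the argument to be the multiplicative-domain step of the last paragraph, the Arveson extension and Stinespring theorems being the substantial but standard input supplied earlier in the section. It is precisely there that invertibility is indispensable: without it one recovers only the one-sided relation $\pi(J)V=VT$, which is why the non-invertible case of Theorem~\ref{thm:main} requires the separate reduction of Section~\ref{sec:notinv}.
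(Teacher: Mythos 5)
Your proof is correct and follows essentially the same route as the paper's: the cited Arveson extension/Stinespring machinery produces the isometry $V$ and representation $\pi$ with $V^*\pi(J^{*\alpha}J^\beta)V=T^{*\alpha}T^\beta$, and the $(I-VV^*)$-annihilation computation then yields the intertwining. The only (harmless) difference is that the paper applies that annihilation trick directly at every power $\beta\in\mathbb Z$, while you apply it only at $\beta=1$ and then propagate using the invertibility of $T$ and $\pi(J)$.
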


\begin{proof}
  Since $\rho:\cH(J)\to\cH(T)$ determined by 
  $\rho(J^{*\alpha} J^\beta) = T^{*\alpha} T^\beta$ 
  is (well defined) and completely positive, 
  by Theorem ?? in \cite{paulsenbook}, there is
  a Hilbert space $\cK,$ a representation $\pi:B(K)\to B(\cK)$ 
  and an isometry $V:H\to\cK$ such that 
\[
  T^{*\alpha}T^\beta =\rho(J^{*\alpha} J^\beta) = V^* \pi(J^{*\alpha} J^\beta)V.
\]
  
  For each $\beta\in\mathbb Z$,
\[
 \begin{split}
   V^* \pi(J)^{*\beta} \pi(J)^\beta V = & T^{*\beta}T^\beta \\ 
     = & V^*\pi(J)^{*\beta} VV^*  \pi(J)^\beta V.
 \end{split}
\]
  Thus, as $I-VV^*$ is a projection, 
\[
  V^*\pi(J)^{*\beta} (I-VV^*)^2 \pi(J)^\beta V =0
\]
 and therefore $(I-VV^*)\pi(J)^\beta V=0$. 
 Consequently,  $VV^* \pi(J)^\beta V = \pi(J)^\beta V.$   It follows that, for each $\beta$,
\[
  VT^\beta = VV^* \pi(J)^\beta V = \pi(J)^\beta V.
\]

  The proof of the converse is routine.  
\end{proof}

\subsection{Symmetrization}
  In this section Agler's symmetrization technique \cite{aglerthreesym},
  which leads to 
  a strong variant of Theorem \ref{thm:arvext},
  is reviewed.    An operator $J$ is \df{symmetric} if $\exp(it)J$ is 
  unitarily equivalent to $J$  for each real number $t$.
  Given an invertible operator $T$, let $\cH_s(T)$  denote
  the span of  $\{T^{*\alpha}T^\alpha : \alpha\in\mathbb Z\}$.
  \index{$S$ - bilateral shift}
  \index{$\cH_s(T)$ the span of $\{I,T^*T, T^{*2}T^2\}$}
 
\begin{proposition}[Agler]
 \label{prop:sym}
  Suppose $T$ and $J$ are invertible operators on Hilbert spaces $H$ and $K$ 
  respectively.
  If $J$ is symmetric and the mapping $\rho:\cH_s(J)\to \cH_s(T)$ 
  determined by
  $\rho(J^{*\alpha} J^\alpha) = T^{*\alpha}T^\alpha$ is (well defined and) 
  completely positive, then there is a Hilbert space $\cK$, a representation
  $\pi:B(K)\to B(\cK)$ and an isometry $V$ such that $VT^j = \pi(J)^j V$
  for all $j\in\mathbb Z$.
\end{proposition}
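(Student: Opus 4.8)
The plan is to reduce the symmetrized hypothesis, which only controls the "diagonal" subspace $\cH_s(J)$ spanned by the $J^{*\alpha}J^\alpha$, to the full hypothesis of Theorem \ref{thm:arvext}, which requires complete positivity of $\rho$ on all of $\cH(J)$. The mechanism for this reduction is an averaging (symmetrization) argument that exploits the symmetry of $J$. First I would introduce the one-parameter group of unitaries $W_t$ on $K$ implementing the unitary equivalence $\exp(it)J \simeq J$, i.e. $W_t^* J W_t = \exp(it) J$, and the corresponding group on $H$; one should be able to arrange $W_t$ to depend measurably on $t$ so that integration against $dt$ over $[0,2\pi]$ makes sense. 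Conjugating an hereditary polynomial $p(J^*,J)=\sum p_{\alpha,\beta}\otimes J^{*\alpha}J^\beta$ by $W_t$ multiplies the $(\alpha,\beta)$ term by $\exp(it(\beta-\alpha))$, so averaging $\frac{1}{2\pi}\int_0^{2\pi} W_t^* p(J^*,J) W_t\, dt$ annihilates every off-diagonal term and leaves exactly $\sum_\alpha p_{\alpha,\alpha}\otimes J^{*\alpha}J^\alpha$, the "diagonal part" lying in $\cH_s(J)$ (and similarly on $H$).

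The key steps, in order, are: (1) extend $\rho:\cH_s(J)\to\cH_s(T)$ to a map $\tilde\rho:\cH(J)\to\cH(T)$ by first averaging on the $J$-side to extract the diagonal part, then applying $\rho$, i.e. $\tilde\rho(p(J^*,J)) := \rho\big(\frac{1}{2\pi}\int_0^{2\pi} W_t^*\, p(J^*,J)\, W_t\, dt\big)$; (2) check $\tilde\rho$ is well defined, which amounts to observing that the averaging is a well-defined linear map onto $\cH_s(J)$ and that $\rho$ is already assumed well defined there; (3) verify $\tilde\rho$ extends $\rho$ (clear, since the diagonal average of a diagonal element is itself); (4) prove $\tilde\rho$ is completely positive — here one uses that conjugation by $1_n\otimes W_t$ is a $*$-automorphism hence completely positive, that the integral of completely positive maps is completely positive, and that the given $\rho$ is completely positive on $\cH_s(J)$, so the composition is completely positive; (5) apply Theorem \ref{thm:arvext} to $\tilde\rho$ to obtain $\cK$, $\pi:B(K)\to B(\cK)$ and an isometry $V$ with $VT^j=\pi(J)^jV$ for all $j\in\mathbb Z$. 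Note that for step (5) one should check that $\tilde\rho(J^{*\alpha}J^\beta)=T^{*\alpha}T^\beta$: when $\alpha=\beta$ this is the hypothesis on $\rho$, and for $\alpha\neq\beta$ the averaging kills the term so $\tilde\rho$ sends it to $0$, but Theorem \ref{thm:arvext} as stated asks for $\tilde\rho(J^{*\alpha}J^\beta)=T^{*\alpha}T^\beta$ on the nose; the resolution is that the theorem really only needs complete positivity of the map $p(J^*,J)\mapsto p(T^*,T)$, and the dilation it produces automatically satisfies $VT^j=\pi(J)^jV$ once $VT^\alpha T^{*0}\cdots$ — more precisely, one re-reads the proof of Theorem \ref{thm:arvext} and sees that it is the positivity, together with the relations $T^{*\alpha}T^\alpha=\tilde\rho(J^{*\alpha}J^\alpha)$, that forces $(I-VV^*)\pi(J)^\beta V=0$, and hence the intertwining; so the off-diagonal values of $\tilde\rho$ are irrelevant.

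The main obstacle I expect is step (4) combined with the measurability/continuity issues underlying the vector-valued integral $\int_0^{2\pi} W_t^*\,(\cdot)\,W_t\,dt$: one must be careful that the integrand takes values in a fixed finite-dimensional (for fixed degree and size $n$) subspace of operators on which the $W_t$-conjugation acts by a continuous — indeed, by the displayed phase-multiplication formula, real-analytic — family of linear maps, so the integral is an honest Riemann integral in that finite-dimensional space and there is no genuine analytic subtlety; the real content is simply bookkeeping that conjugation-then-integration is completely positive and lands in $\cH_s(J)$. A secondary point to handle cleanly is the well-definedness in step (2): the decomposition of an element of $\cH(J)$ as a sum $\sum p_{\alpha,\beta}\otimes J^{*\alpha}J^\beta$ need not be unique, so one should phrase the averaging intrinsically as $P(A)=\frac{1}{2\pi}\int_0^{2\pi} W_t^* A W_t\,dt$ for $A\in\cH(J)$ and observe this is manifestly well defined on operators, with range in $\cH_s(J)$, before composing with $\rho$.
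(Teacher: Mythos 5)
Your reduction breaks down at step (5), and the failure is not a technicality. The map you build, $\tilde\rho=\rho\circ P$ with $P(A)=\frac{1}{2\pi}\int_0^{2\pi}W_t^*AW_t\,dt$, sends every off-diagonal monomial $J^{*\alpha}J^\beta$, $\alpha\neq\beta$, to $0$; in particular $\tilde\rho(J^\beta)=0$ for $\beta\neq 0$. Your claim that the off-diagonal values are irrelevant misreads the proof of Theorem \ref{thm:arvext}: that argument uses \emph{both} the diagonal identity $V^*\pi(J)^{*\beta}\pi(J)^\beta V=T^{*\beta}T^\beta$ \emph{and} the off-diagonal identity $V^*\pi(J)^\beta V=\rho(J^\beta)=T^\beta$. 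It is the resulting equality $V^*\pi(J)^{*\beta}\pi(J)^\beta V=\bigl(V^*\pi(J)^\beta V\bigr)^*\bigl(V^*\pi(J)^\beta V\bigr)$ that forces $(I-VV^*)\pi(J)^\beta V=0$, and the last line $VT^\beta=VV^*\pi(J)^\beta V$ again invokes $T^\beta=V^*\pi(J)^\beta V$. For a Stinespring dilation of your $\tilde\rho$ one only gets the vacuous Schwarz inequality $0=\bigl(V^*\pi(J)^\beta V\bigr)^*\bigl(V^*\pi(J)^\beta V\bigr)\preceq T^{*\beta}T^\beta$, so nothing forces $(I-VV^*)\pi(J)^\beta V=0$. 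Worse, the desired conclusion is incompatible with your dilation: $VT^\beta=\pi(J)^\beta V$ would give $T^\beta=V^*VT^\beta=V^*\pi(J)^\beta V=\tilde\rho(J^\beta)=0$, impossible since $T$ is invertible. Complete positivity of $\rho\circ P$ is essentially automatic and carries no off-diagonal information; recovering that information is exactly the content of the proposition, so it cannot be discarded by averaging on the $J$-side.

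The paper supplies the missing idea by moving the symmetrization to the $T$-side via an auxiliary operator. One tensors with the bilateral shift, $\tilde T=T\otimes S$ on $H\otimes L^2$, and proves (Lemma \ref{lem:sym1}) that the \emph{full} hereditary map $J^{*\alpha}J^\beta\mapsto\tilde T^{*\alpha}\tilde T^\beta$ is completely positive: given $p(J^*,J)\succeq0$, form the block matrix $P=\bigl((I_n\otimes y^j)p(x,y)(I_n\otimes x^k)\bigr)_{j,k=-N}^N$, symmetrize (this is where the averaging over the symmetry group enters, Lemma \ref{lem:sym0}), apply the hypothesis on the diagonal map to get $P^s(T^*,T)\succeq0$, and identify $\langle P^s(T^*,T)h,h\rangle$ with $\langle p(\tilde T^*,\tilde T)\xi,\xi\rangle$ for $\xi=\sum_k(I\otimes T^k)h_k\otimes z^k$, such vectors being dense because $T$ is invertible. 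A Fej\'er-kernel limit (Lemma \ref{lem:sym2}) then transfers positivity from $\tilde T$ back to $T$, so the canonical map $J^{*\alpha}J^\beta\mapsto T^{*\alpha}T^\beta$ itself is completely positive and Theorem \ref{thm:arvext} applies. To repair your argument you would need a step of this kind that certifies complete positivity of the map with the \emph{correct} off-diagonal values; your observation that $P$ is intrinsically defined and its integrand continuous in $t$ is sound, but it addresses only the easy part of the problem.
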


  The proof of this Proposition occupies the remainder of this section.
  Let $S$ denote the \df{bilateral shift} operator on $L^2=L^2(\mathbb T)$.  \index{$S$ the bilateral shift}
  Since $S$ is symmetric, it is readily seen that, for any \index{$L^2=L^2(\mathbb T)$}
  operator $T,$ the operator $\tilde{T}=T\otimes S$ \index{$\tilde{T}=T\otimes S$}
     acting on $H\otimes L^2$ is also symmetric.
  Moreover, if $T\in\cFc$, then so is $\tilde{T}$. 
  Given $p\in\cP$ as in Equation \eqref{eq:hpoly} let 
  $p^s$ denote its \df{symmetrization}, \index{$p^s$ the symmetrization of the poly $p$}
\[
  p^s = \sum p_{\alpha,\alpha} y^\alpha x^\alpha.
\]

\begin{lemma}
  \label{lem:sym0} 
    If $J$ is symmetric, $q\in\cP$ and
    $q(J)\succeq 0$, then $q^s(J)\succeq 0$.

   Let  $T$ be a given operator on the Hilbert space $H$ and \index{$W:H\to H\otimes L^2$}
   let $W:H\to H\otimes L^2$ denote the isometry $Wh= h\otimes 1$.  If
  $P\in\cP_n$, then 
\[
  P^s(T^*,T) = (I_N\otimes W)^* P(\tilde{T}^*,\tilde{T})(I_N\otimes W).
\]
%
\end{lemma}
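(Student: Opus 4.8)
The plan is to prove the two assertions of Lemma \ref{lem:sym0} separately, handling the identity about $P^s$ first since it is purely computational and then using it (together with a positivity-of-averages argument) to derive the first assertion about symmetric $J$.

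For the identity, I would unwind both sides acting on $H\otimes L^2$. Writing $P(x,y)=\sum_{\alpha,\beta} P_{\alpha,\beta}\, y^\alpha x^\beta$ with $P_{\alpha,\beta}\in M_n$, evaluation at $\tilde T = T\otimes S$ gives $P(\tilde T^*,\tilde T) = \sum_{\alpha,\beta} P_{\alpha,\beta}\otimes (T^{*\alpha}T^\beta)\otimes (S^{*\alpha}S^\beta)$, since $S$ is unitary and commutes past the tensor factors. Conjugating by $I_N\otimes W$ where $Wh=h\otimes 1$ leaves the $M_n$ and $B(H)$ factors untouched and replaces the $B(L^2)$ factor $S^{*\alpha}S^\beta$ by the scalar $\langle S^{*\alpha}S^\beta 1,1\rangle_{L^2} = \langle S^\beta 1, S^\alpha 1\rangle$. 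Because $S$ is the bilateral shift on $L^2(\mathbb T)$, $S^\beta 1 = e^{i\beta\theta}$, so this inner product is $\delta_{\alpha,\beta}$. Thus the double sum collapses to $\sum_\alpha P_{\alpha,\alpha}\otimes T^{*\alpha}T^\alpha = P^s(T^*,T)$, which is exactly the claimed formula. The only care needed is bookkeeping of the tensor-leg ordering, which I would fix by a consistent convention at the outset.

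For the first assertion, suppose $J$ is symmetric, so for each real $t$ there is a unitary $U_t$ with $U_t^* J U_t = e^{it}J$ (hence $U_t^* J^\beta U_t = e^{i\beta t} J^\beta$ and, taking adjoints, $U_t^* J^{*\alpha} U_t = e^{-i\alpha t} J^{*\alpha}$). If $q(J^*,J)=\sum_{\alpha,\beta} q_{\alpha,\beta}\otimes J^{*\alpha}J^\beta \succeq 0$, then conjugating by $1_n\otimes U_t$ gives $\sum_{\alpha,\beta} e^{i(\beta-\alpha)t}\, q_{\alpha,\beta}\otimes J^{*\alpha}J^\beta \succeq 0$ for every $t$. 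Averaging over $t\in[0,2\pi]$ against $\frac{1}{2\pi}\,dt$ kills every off-diagonal term ($\beta\neq\alpha$) and leaves $\sum_\alpha q_{\alpha,\alpha}\otimes J^{*\alpha}J^\alpha = q^s(J^*,J)$; since a (strong, hence weak) limit of an average of positive semidefinite operators is positive semidefinite, $q^s(J)\succeq 0$. The averaging step is the only point requiring a word of justification — one can phrase it via the bounded weak-$*$ integral of the norm-bounded, weakly continuous family $t\mapsto (1_n\otimes U_t)^* q(J)(1_n\otimes U_t)$, or equivalently test against an arbitrary vector $\xi$ and integrate the scalar nonnegative function $t\mapsto \langle (1_n\otimes U_t)^* q(J)(1_n\otimes U_t)\xi,\xi\rangle\ge 0$.

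I do not anticipate a genuine obstacle here; the result is a standard symmetrization computation. If anything, the subtle point is making sure that "symmetric" is used in the precise sense defined just above the lemma (unitary equivalence of $e^{it}J$ and $J$, with the implementing unitaries allowed to depend measurably — indeed continuously — on $t$), so that the averaging is legitimate; for the concrete application to $\tilde T = T\otimes S$ this is automatic because one may take $U_t = I\otimes M_{e^{it}}$, multiplication by the unimodular constant, which is manifestly strongly continuous in $t$.
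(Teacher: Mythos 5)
Your proposal is correct and follows essentially the same route as the paper: the same orthogonality-of-$\{z^\alpha\}$ computation against the vector $1\in L^2$ for the identity involving $W$, and the same conjugate-by-$1_n\otimes U_t$ and average-over-$t$ argument for the symmetric part. (Your caveat about measurable or continuous dependence of $U_t$ on $t$ is moot: the conjugated operator equals $q(e^{-it}J^*,e^{it}J)$, an explicit trigonometric polynomial in $t$ with operator coefficients, so the integrand is norm continuous regardless of how $U_t$ is chosen.)
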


\begin{proof}
  For each $t$ there is a unitary operator $U_t$ such that 
  $e^{it}J=U_t^* J U_t$. Letting $n$ denote the size of $q$
  (so that $q\in\cP_n$),  it follows that
\[
  q(e^{-it}J^*, e^{it}J) = (I_n\otimes U_t^*) q(J^*,J) (I_n\otimes U_t) \succeq 0.
\]
 Hence,
\[
  q^s(J^*,J) =\frac{1}{2\pi} \int_0^{2\pi}   q(e^{-it}J^*,e^{it}J) \, dt \succeq 0.
\]

  To prove the second part, write $p\in\cP_1$ as in Equation
  \eqref{eq:hpoly} in which case,
\[
 \begin{split}
  \langle p(\tilde{T},\tilde{T}^*)Wh,Wf\rangle = &
  \langle p(\tilde{T},\tilde{T}^*) h\otimes 1,f \otimes 1\rangle \\
   = &\langle \sum_{\alpha,\beta}  p_{\alpha,\beta} T^\alpha h \otimes z^\alpha,
         T^{\beta}f \otimes z^\beta \rangle \\
   = & \langle \sum_{\alpha} p_{\alpha,\alpha} T^{*\alpha} T^{\alpha}h,f\rangle \\
   =&\langle p^s(T,T^*)h,f\rangle.
 \end{split}
\]
  Applying the result for $p\in\cP_1$ entry-wise to $P$ completes the proof.
\end{proof}

\begin{lemma}
 \label{lem:sym1}
   Suppose $T$ and $J$ are invertible operators on Hilbert spaces $H$ and $K$ 
  respectively.
  If $J$ is symmetric and the mapping $\rho:\cH_s(J)\to \cH_s(T)$ 
  determined by
  $\rho(J^{*\alpha} J^\alpha) = T^{*\alpha}T^\alpha$ is (well defined and) 
  completely positive, then the mapping $\tilde{\rho}:\cH(J)\to\cH(\tilde{T})$
  determined by 
\[
  \tilde{\rho}(J^{*\alpha} J^\beta) = \tilde{T}^{*\alpha} \tilde{T}^\beta
\]
 is also (well defined and) completely positive.
\end{lemma}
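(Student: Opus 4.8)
The plan is to collapse the statement to a single positivity implication and then prove that implication by dilating $\rho$ to a $*$-representation and exploiting the extra tensor factor in $\tilde{T}=T\otimes S$. A general element of $M_n\otimes\cH(J)$ has the form $P(J^*,J)$ for a hereditary $P$ of size $n$, and if a hereditary $R$ is written as $R_1+iR_2$ with $R_1,R_2$ selfadjoint, then the condition ``$R(J^*,J)=0\Rightarrow R(\tilde{T}^*,\tilde{T})=0$'' follows from positivity statements (applied to $\pm R_1$ and $\pm R_2$, all of which evaluate to $0$ at $J$). Hence $\tilde{\rho}$ is well defined and completely positive as soon as the following holds: for every $n$ and every $P\in\cP_n$, if $P(J^*,J)\succeq 0$ then $P(\tilde{T}^*,\tilde{T})\succeq 0$. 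Fix such a $P$; the real obstacle is that $\rho$ is known only on the diagonal system $\cH_s(J)$ (equal exponents), while $P(\tilde{T}^*,\tilde{T})$ couples all the moments $\tilde{T}^{*\alpha}\tilde{T}^{\beta}$.

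First, dilate $\rho$. As $\cH_s(J)$ is a unital selfadjoint subspace of $B(K)$ and $\rho$ is unital and completely positive, the Arveson extension and Stinespring dilation theorems \cite{paulsenbook} furnish a Hilbert space $\cK$, a unital $*$-representation $\pi:B(K)\to B(\cK)$, and an isometry $V:H\to\cK$ with $\rho(Y)=V^*\pi(Y)V$ for $Y\in\cH_s(J)$. Set $\widehat J=\pi(J)$. Then $\widehat J$ is invertible; it is again symmetric, with symmetry unitaries $\pi(U_\phi)$ (apply $\pi$ to unitaries $U_\phi$ with $e^{i\phi}J=U_\phi^*JU_\phi$); one has $V^*\widehat J^{*\alpha}\widehat J^{\alpha}V=\rho(J^{*\alpha}J^{\alpha})=T^{*\alpha}T^{\alpha}$ for every $\alpha\in\bZ$, so each map $\widehat J^{\,\alpha}VT^{-\alpha}:H\to\cK$ is an isometry; and $P(\widehat J^*,\widehat J)=(I_n\otimes\pi)(P(J^*,J))\succeq 0$ since $\pi$ is a $*$-representation.

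Next, build an isometry $\mathcal{W}:H\otimes L^2\to\cK\otimes L^2$ intertwining $\tilde{T}$ with $\widehat J\otimes S$ by setting $\mathcal{W}(h\otimes z^k)=(\widehat J^{\,k}VT^{-k}h)\otimes z^k$ on the dense span of the $h\otimes z^k$. Because the subspaces $\cK\otimes z^k$ are mutually orthogonal and each $\widehat J^{\,k}VT^{-k}$ is an isometry, $\mathcal{W}$ is isometric there and extends to an isometry; a direct computation on spanning vectors gives $\mathcal{W}\tilde{T}=(\widehat J\otimes S)\mathcal{W}$, hence $\mathcal{W}\tilde{T}^{\beta}=(\widehat J\otimes S)^{\beta}\mathcal{W}$ for all $\beta\in\bZ$ (using invertibility of $\tilde{T}$ and of $\widehat J\otimes S$). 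Therefore $P(\tilde{T}^*,\tilde{T})=(I_n\otimes\mathcal{W})^*\,P\big((\widehat J\otimes S)^*,\widehat J\otimes S\big)\,(I_n\otimes\mathcal{W})$, so it is enough to show $P\big((\widehat J\otimes S)^*,\widehat J\otimes S\big)\succeq 0$. Now $(\widehat J\otimes S)^{*\alpha}(\widehat J\otimes S)^{\beta}=\widehat J^{*\alpha}\widehat J^{\beta}\otimes S^{\beta-\alpha}$, and since $S$ is the bilateral shift, under $\cK\otimes L^2\cong L^2(\mathbb T;\cK)$ this operator is multiplication by the norm-continuous symbol $\theta\mapsto P\big((e^{i\theta}\widehat J)^*,e^{i\theta}\widehat J\big)$; by symmetry of $\widehat J$ the symbol equals $(I_n\otimes\pi(U_\theta))^*P(\widehat J^*,\widehat J)(I_n\otimes\pi(U_\theta))\succeq 0$ at every $\theta$. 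So the multiplication operator is positive, giving $P(\tilde{T}^*,\tilde{T})\succeq 0$ and finishing the proof.

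I expect the main difficulty to be the off-diagonal gap flagged above: converting the diagonal data carried by $\rho$ on $\cH_s(J)$ into control of all hereditary expressions in $\tilde{T}$. Stinespring turns that data into a genuine representation, and it is precisely the padding by $S$ in $\tilde{T}=T\otimes S$ that lets the isometries $\widehat J^{\,k}VT^{-k}$ be glued along orthogonal Fourier modes into one isometry intertwining $\tilde{T}$ with $\widehat J\otimes S$; without the $\otimes S$ their ranges would overlap and no such intertwiner would exist. Symmetry of $J$, inherited by $\widehat J$, then makes the positivity of $P\big((\widehat J\otimes S)^*,\widehat J\otimes S\big)$ automatic.
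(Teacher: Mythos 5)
Your proof is correct, but it follows a genuinely different route from the paper's. The paper never dilates inside this lemma: it fixes $p\in\cP_n$, forms the augmented $(2N+1)\times(2N+1)$ block polynomial $P=\bigl((I_n\otimes y^j)p(x,y)(I_n\otimes x^k)\bigr)_{j,k=-N}^N$, notes $P(J^*,J)\succeq 0$, symmetrizes (Lemma \ref{lem:sym0}, using symmetry of $J$) to get $P^s(J^*,J)\succeq 0$, applies the complete positivity hypothesis directly to $P^s$ (which lives in matrices over $\cH_s$) to get $P^s(T^*,T)\succeq 0$, and then identifies the quadratic form of $P^s(T^*,T)$ with that of $p(\tilde T^*,\tilde T)$ on vectors $\sum_{k=-N}^N (I\otimes T^k)h_k\otimes z^k$, which are dense in $\mathbb C^n\otimes H\otimes L^2$ as $N\to\infty$ because $T$ is invertible. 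You instead invoke Arveson extension plus Stinespring at the outset on the diagonal operator system $\cH_s(J)$ (a legitimate, non-circular use of \cite{paulsenbook}, with the routine caveat that one first extends the completely positive map to the norm closure of $\cH_s(J)$), obtain $\widehat J=\pi(J)$, glue the isometries $\widehat J^{\,k}VT^{-k}$ along Fourier modes into an intertwiner $\mathcal W\tilde T^\beta=(\widehat J\otimes S)^\beta\mathcal W$, and finish by viewing $P\bigl((\widehat J\otimes S)^*,\widehat J\otimes S\bigr)$ as multiplication by the pointwise-positive symbol $P\bigl((e^{i\theta}\widehat J)^*,e^{i\theta}\widehat J\bigr)$, positivity coming from symmetry of $\widehat J$; your reduction of well-definedness and complete positivity of $\tilde\rho$ to the single implication $P(J^*,J)\succeq 0\Rightarrow P(\tilde T^*,\tilde T)\succeq 0$ (via real and imaginary parts) is also sound. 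What each approach buys: the paper's argument is elementary and self-contained at this stage, deferring all dilation machinery to Theorem \ref{thm:arvext}, while yours front-loads that machinery but is more structural --- it actually exhibits $\tilde T$ as isometrically intertwined with $\pi(J)\otimes S$, which makes the complete positivity conclusion transparent and explains concretely why the tensor factor $S$ in $\tilde T=T\otimes S$ is what allows the diagonal data on $\cH_s(J)$ to control all hereditary moments.
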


\begin{proof}
  Fix a positive integer $n$ and a $p\in\cP_n$.   
  In particular, $p(T^*,T)$ acts on $\mathbb C^n\otimes H$. 
  Given a positive integer $N$
  consider the $(2N+1)\times (2N+1)$  matrix 
  whose entries are $n\times n$ matrix polynomials
\[
  P = \begin{pmatrix}  (I_n\otimes y^j) p(x,y) (I_n\otimes x^k) \end{pmatrix}_{j,k=-N}^N.
\]
  Here $I_n$ is the $n\times n$ identity matrix. Thus, the $(j,k)$ entry
  of $P(T^*,T)$ is the operator on $\mathbb C^n\otimes H$ given by
\[
  (I_n\otimes T^{*j}) p(T^*,T) (I_n\otimes T^k).
\]

  Viewing $P(T,T^*)$ as an operator on
  $(\mathbb C^n\otimes H)\otimes \mathbb C^{2N+1}$,
  let $\{e_{-N},\dots,e_0,\dots,e_N\}$ denote
  the corresponding standard basis for $\mathbb C^{2N+1}$.
  Given a vector $h =\sum h_a \otimes e_a \in  (\mathbb C^n\otimes H)\otimes \mathbb C^{2N+1}$,
  an application of Lemma \ref{lem:sym0} gives,
\begin{equation}
 \label{eq:symkey}
 \begin{split}
    \langle P^s({T}^*,T) h,h\rangle 
     = &  \langle P(\tilde{T}^*,\tilde{T}) h\otimes 1, h\otimes 1\rangle \\
    =  &\sum \langle
       (I\otimes \tilde{T}^{*j}) p(\tilde{T}^*,\tilde{T}) (I\otimes \tilde{T}^k) h_k\otimes 1,
         h_j\otimes 1\rangle \\
    = & \sum \langle p(\tilde{T}^*,\tilde{T}) (I\otimes \tilde{T}^k) h_k\otimes 1, (I\otimes \tilde{T}^j) h_j\otimes 1\rangle \\
    = & \sum \langle p(\tilde{T}^*,\tilde{T}) [(I\otimes {T}^k) h_k]\otimes z^k, [(I\otimes {T}^j) h_j] \otimes z^j \rangle \\
    = & \langle p(\tilde{T}^*,\tilde{T}) \sum_{k=-N}^N [(I\otimes T^k)h_k] \otimes z^k, \sum_{j=-N}^N [(I\otimes T^j) h_j] \otimes z^j \rangle.
 \end{split}
\end{equation}

   Now suppose that $p(J^*,J)\succeq 0$. It then follows that $P(J^*,J)\succeq 0$ and thus $P^s(J^*,J)\succeq 0$.
   The hypotheses imply $P^s(T^*,T)\succeq 0.$ 
   From Equation \eqref{eq:symkey} and the fact that sums of the form $\sum_{j=-N}^N T^j h_j\otimes z^j$ are
   dense in $H\otimes L^2$ (since $T$ is invertible), it follows that $p(\tilde{T}^*,\tilde{T})\succeq 0$. 
\end{proof}

\begin{lemma}
 \label{lem:sym2}
   Suppose $T\in B(H)$ is invertible. 
   If $p\in\cP$ and $p(\tilde{T}^*,\tilde{T})\succeq 0$, then
   $p(T^*,T)\succeq 0$. In particular, the canonical mapping
   $p(\tilde{T}^*,\tilde{T})\mapsto p(T^*,T)$ is well defined.
\end{lemma}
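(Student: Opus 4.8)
The plan is to realize $\tilde{T}=T\otimes S$ as a direct integral of the operators $e^{i\theta}T$, $\theta\in\mathbb{T}$, and then to recover the positivity of $p(T^*,T)$ by specializing to $\theta=0$. Since $S$ is unitary, $S^*=S^{-1}$, so for all $\alpha,\beta\in\mathbb{Z}$ one has $\tilde{T}^{*\alpha}\tilde{T}^\beta=(T^{*\alpha}T^\beta)\otimes S^{\beta-\alpha}$, and hence for a size-$n$ hereditary polynomial $p$ as in \eqref{eq:hpoly},
\[
 p(\tilde{T}^*,\tilde{T}) = \sum_{\alpha,\beta} p_{\alpha,\beta}\otimes T^{*\alpha}T^\beta\otimes S^{\beta-\alpha},
\]
an operator on $(\mathbb{C}^n\otimes H)\otimes L^2$. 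Identifying $(\mathbb{C}^n\otimes H)\otimes L^2$ with $L^2(\mathbb{T};\mathbb{C}^n\otimes H)$ and $S$ with multiplication by $e^{i\theta}$, the operator $\tilde{T}$ acts by $(\tilde{T}g)(\theta)=e^{i\theta}\,Tg(\theta)$, so that $p(\tilde{T}^*,\tilde{T})$ is multiplication by the $B(\mathbb{C}^n\otimes H)$-valued symbol $\theta\mapsto p(e^{-i\theta}T^*,e^{i\theta}T)$, which is continuous in operator norm.

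Next I would invoke the standard fact that multiplication by a selfadjoint, norm-continuous operator-valued symbol on $L^2(\mathbb{T};\mathbb{C}^n\otimes H)$ is positive semidefinite if and only if the symbol is pointwise positive semidefinite for \emph{every} $\theta$ (positivity a.e.\ upgrades to everywhere by continuity). Thus $p(\tilde{T}^*,\tilde{T})\succeq 0$ forces $p(e^{-i\theta}T^*,e^{i\theta}T)\succeq 0$ for all $\theta$, and taking $\theta=0$ gives $p(T^*,T)\succeq 0$. Concretely, fix $h\in\mathbb{C}^n\otimes H$ and test $p(\tilde{T}^*,\tilde{T})$ against the vectors $h\otimes f_N$, where $f_N=(N+1)^{-1/2}\sum_{j=0}^N e^{ij\theta}$ is a unit vector in $L^2$; then $\langle S^k f_N,f_N\rangle=(N+1-|k|)/(N+1)\to 1$ for each fixed $k$, so since $p$ has finite degree,
\[
 \langle p(\tilde{T}^*,\tilde{T})(h\otimes f_N),h\otimes f_N\rangle=\sum_{\alpha,\beta}\langle p_{\alpha,\beta}T^{*\alpha}T^\beta h,h\rangle\,\langle S^{\beta-\alpha}f_N,f_N\rangle\longrightarrow\langle p(T^*,T)h,h\rangle .
\]
As the left-hand side is $\ge 0$ for every $N$, the limit is $\ge 0$, and letting $h$ vary yields $p(T^*,T)\succeq 0$.

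The well-definedness of the canonical map $p(\tilde{T}^*,\tilde{T})\mapsto p(T^*,T)$ then follows formally: if $p(\tilde{T}^*,\tilde{T})=q(\tilde{T}^*,\tilde{T})$ then both $\pm(p-q)(\tilde{T}^*,\tilde{T})$ equal $0\succeq 0$, so the inequality just proved gives $\pm(p-q)(T^*,T)\succeq 0$, i.e.\ $p(T^*,T)=q(T^*,T)$. The only genuine content beyond bookkeeping is the elementary harmonic-analysis input that positivity of the symbol is detected pointwise — equivalently, the convergence $\langle S^k f_N,f_N\rangle\to 1$ — together with the observation that only finitely many Fourier modes matter because $p$ has bounded degree; invertibility of $T$ enters only to make $p(T^*,T)$ and $p(\tilde{T}^*,\tilde{T})$ meaningful.
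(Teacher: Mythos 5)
Your proof is correct and, at its core, is the same argument as the paper's: test $p(\tilde{T}^*,\tilde{T})$ against vectors $h\otimes f_N$ built from Fej\'er-type unit vectors in $L^2(\mathbb T)$, use $\langle S^k f_N,f_N\rangle\to 1$ for each fixed $k$ together with the finite degree of $p$, and pass to the limit (the paper uses symmetric kernels $D_N$ and polarizes, while you work directly with the quadratic form and the matrix-valued case, which suffices on a complex Hilbert space). The multiplication-operator/direct-integral framing you add is valid but is only packaging around that same computation.
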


\begin{proof}
 Let
\[
  D_N = \frac{1}{2N+1} \sum_{j=-N}^N e^{ijt} \in L^2(\mathbb T).
\]
 If $h,f\in H$, then for $\alpha,\beta\in\mathbb Z$,
\[
 \begin{split}
   \langle \tilde{T}^\alpha h\otimes D_N, \tilde{T}^\beta f\otimes D_N \rangle
   = & \langle T^\alpha h,T^\beta f\rangle \, \langle z^{\alpha-\beta} D_N,D_N\rangle \\
   = & \langle T^\alpha h,T^\beta f\rangle \, \frac{2N+1 -|\alpha-\beta|}{2N+1}.
 \end{split}
\] 
  Thus, if $p\in\cP_1$, then 
\[
 \lim_{N\to\infty} \langle p(\tilde{T}^*,\tilde{T}) h\otimes D_N, f\otimes D_N\rangle 
    = \langle p(T^*,T)h,f\rangle.
\]
  In particular, if $p(\tilde{T}^*,\tilde{T})\succeq 0$, then also $p(T^*,T)\succeq 0$.
 The square matrix version of this implication is readily established and 
  proves the lemma.
\end{proof}

\begin{proof}[Proof of Proposition \ref{prop:sym}]
 From Lemma \ref{lem:sym1}, the mapping $\tilde{\rho}:\cH(J)\to\cH(\tilde{T})$
 (as defined in Lemma \ref{lem:sym1})  is completely positive. 
 On the other hand, from Lemma \ref{lem:sym2}, the canonical  mapping 
  $\tau:\cH(\tilde{T}) \to \cH(T)$
  is also (well defined and) completely positive. Thus, the composition
 $\rho=\tau\circ\ \tilde{\rho}$ is also completely positive.
  The conclusion now follows from the Arveson Extension Theorem,
  Theorem \ref{thm:arvext}.
\end{proof}

\section{Lifting  Invertible $3$-Isometries}
\label{sec:inv}
 In this section Theorem \ref{thm:main} is established in the case
  that $T$ is invertible.  The first step uses Proposition \ref{prop:sym} to prove 
  that if $T$ is invertible, then $T$ lifts to a $J$ of the form
  in Equation \eqref{eq:Jordanc}.  A separate argument, 
  found in Section \ref{sec:spectral},  shows
  that the spectrum of $J$ can be chosen to be the same as that of $T$.

 Given $T\in\cFc$, let $B_0(T^*,T) = I-\frac{1}{c^2}B_2(T^*,T)$. \index{$B_0(T^*,T)=I-\frac{1}{c^2} B_2(T^*,T)$}
   The operator-valued quadratic 
\[
 Q(T,s) = \sum_{j=0}^2 B_j(T^*,T) s^j 
\]
 takes positive semi-definite values. Hence,  (\cite{rosenblum})
  there exists an auxiliary Hilbert space $\cY$ and operators
  $V_0,V_1:H\to \cY$ such that 
 as
\begin{equation}
 \label{eq:Vs}
 Q(T,s) = (V_0 +s V_1)^* (V_0 +s V_1).
\end{equation}

 The following lemma validates the hypotheses of Proposition \ref{prop:sym}.
 As before, let $S$ denote the bilateral shift equal the operator of multiplication by $z=e^{it}$ on $L^2(\mathbb T)$. 
 In particular, $S$ is unitary and 
\begin{equation}
 \label{eq:scriptJ}
  \cJ = \begin{pmatrix} S & cS \\ 0 & S \end{pmatrix}
\end{equation}
 has the form of Equation \eqref{eq:Jordanc}. 
 \index{$\cJ$}
  Recalling the definitions of $B_j(\cJ^*,\cJ)$, straightforward computation shows,
\[
 \begin{split}
   B_0(\cJ^*,\cJ) = & \begin{pmatrix} I & 0 \\ 0 & 0 \end{pmatrix}\\
   B_1(\cJ^*,\cJ)= & \begin{pmatrix} 0 & cI \\ cI & 0\end{pmatrix}\\
   B_2(\cJ^*,\cJ) = & \begin{pmatrix} 0 & 0 \\ 0 & c^2 \end{pmatrix}.
 \end{split}
\]

\begin{lemma}
 \label{lem:factor}
  If $T$ is in the class $\cFc$, then the mapping 
  $\rho:\cH_s(\cJ)\to \cH_s(T)$ determined by
  $\rho(\cJ^{*\alpha} \cJ^\alpha) = T^{*\alpha}T^\alpha$ 
  is (well defined and) completely positive.
\end{lemma}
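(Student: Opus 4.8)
The plan is to make the operator systems $\cH_s(\cJ)$ and $\cH_s(T)$ completely explicit and thereby reduce the complete positivity of $\rho$ to a single finite-dimensional matrix inequality. Since $S$ is unitary, $\cJ$ is invertible and $\cJ^{*\alpha}\cJ^\alpha=I+\alpha B_1(\cJ^*,\cJ)+\alpha^2 B_2(\cJ^*,\cJ)$ for every $\alpha\in\bZ$; likewise $T^{*\alpha}T^\alpha=I+\alpha B_1(T^*,T)+\alpha^2 B_2(T^*,T)$. From the explicit block forms of $B_0(\cJ^*,\cJ),B_1(\cJ^*,\cJ),B_2(\cJ^*,\cJ)$ recorded above, these three operators are linearly independent, so $\cH_s(\cJ)$ is three-dimensional with that basis. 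Hence $\rho$, and each $1_n\otimes\rho$, is well defined: a linear relation $\sum_\alpha c_\alpha \cJ^{*\alpha}\cJ^\alpha=0$ forces $\sum c_\alpha=\sum\alpha c_\alpha=\sum\alpha^2 c_\alpha=0$ and therefore $\sum c_\alpha T^{*\alpha}T^\alpha=0$ as well. Consequently a self-adjoint element of $M_n\otimes\cH_s(\cJ)$ has the form $X=a\otimes I+b\otimes B_1(\cJ^*,\cJ)+d\otimes B_2(\cJ^*,\cJ)$ with Hermitian $a,b,d\in M_n$, and $(1_n\otimes\rho)(X)=a\otimes I+b\otimes B_1(T^*,T)+d\otimes B_2(T^*,T)$. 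It suffices to prove that $X\succeq 0$ implies $(1_n\otimes\rho)(X)\succeq 0$.

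Next I would identify $X\succeq 0$ with an honest matrix inequality. Substituting the block forms of the $B_j(\cJ^*,\cJ)$ into $X$ and regrouping, under the natural identification $\mathbb C^n\otimes(L^2\oplus L^2)\cong\mathbb C^{2n}\otimes L^2$ one obtains $X=M\otimes I_{L^2}$ with
\[
 M=\begin{pmatrix} a & cb\\ cb & a+c^2 d\end{pmatrix}\in M_{2n}.
\]
Since $L^2\neq\{0\}$, this shows $X\succeq 0$ if and only if $M\succeq 0$.

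For the image I would use the factorization $Q(T,s)=(V_0+sV_1)^*(V_0+sV_1)$ from Equation \eqref{eq:Vs}. Comparing powers of $s$ gives $B_0(T^*,T)=V_0^*V_0$, $B_1(T^*,T)=V_0^*V_1+V_1^*V_0$, $B_2(T^*,T)=V_1^*V_1$, and hence $I=B_0(T^*,T)+c^{-2}B_2(T^*,T)=V_0^*V_0+c^{-2}V_1^*V_1$. Substituting these three identities into $(1_n\otimes\rho)(X)$ and collecting terms exhibits it as
\[
 (1_n\otimes\rho)(X)=\begin{pmatrix}1_n\otimes V_0\\ 1_n\otimes V_1\end{pmatrix}^{*}\,(\tilde M\otimes I_{\cY})\,\begin{pmatrix}1_n\otimes V_0\\ 1_n\otimes V_1\end{pmatrix},\qquad \tilde M=\begin{pmatrix} a & b\\ b & c^{-2}a+d\end{pmatrix},
\]
so $\tilde M\succeq 0$ forces $(1_n\otimes\rho)(X)\succeq 0$.

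Finally I would close the loop: $\tilde M=D M D$ with $D=\mathrm{diag}(1_n,c^{-1}1_n)$ invertible and self-adjoint, so $M\succeq 0$ if and only if $\tilde M\succeq 0$. The chain $X\succeq 0\iff M\succeq 0\iff\tilde M\succeq 0\Rightarrow(1_n\otimes\rho)(X)\succeq 0$, valid for every $n$, then yields complete positivity. The computations are routine; the only step that needs any thought is the last one, namely recognizing that the quadratic-form matrix $M$ produced by $\cJ$ and the quadratic-form matrix $\tilde M$ produced by the Rosenblum factorization of $Q(T,s)$ differ merely by the diagonal rescaling that encodes the parameter $c$. I expect the sole place where care is genuinely required is the tensor-factor bookkeeping and the two passages between operator positivity (on $\mathbb C^n\otimes(L^2\oplus L^2)$ and on $\mathbb C^n\otimes H$) and matrix positivity.
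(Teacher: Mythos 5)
Your proposal is correct and follows essentially the same route as the paper: identify $\cH_s(\cJ)$ and $\cH_s(T)$ with the span of $\{I,B_1,B_2\}$ via the explicit block forms of the $B_j(\cJ^*,\cJ)$, translate positivity of an element of $M_n\otimes\cH_s(\cJ)$ into positivity of a $2n\times 2n$ matrix, and use the Rosenblum factorization $Q(T,s)=(V_0+sV_1)^*(V_0+sV_1)$ to exhibit the image as positive. The only (cosmetic) difference is that you realize $(1_n\otimes\rho)(X)$ as a congruence of $\tilde M\otimes I_{\cY}$ by the column $\begin{pmatrix}1_n\otimes V_0\\ 1_n\otimes V_1\end{pmatrix}$ and compare $M$ with $\tilde M$ by the diagonal rescaling $D=\mathrm{diag}(1_n,c^{-1}1_n)$, whereas the paper first factors the rescaled positive matrix as a Gram matrix $Y_j^*Y_k=X_{j+k}$ and then writes $(1_n\otimes\rho)(X)=(Y_0\otimes V_0+Y_1\otimes V_1)^*(Y_0\otimes V_0+Y_1\otimes V_1)$.
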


\begin{proof}
 The spaces 
 $\cH_s(\cJ)$ and $\cH_s(T)$ are spanned by the triples 
 $\{B_0(\cJ^*,\cJ), B_1(\cJ^*,\cJ), B_2(\cJ^*,\cJ)\}$ and 
  $\{B_0(T^*,T), B_1(T^*,T), B_2(T^*,T)\}$ respectively,
 since  both $\cJ$ and $T$ are $3$-isometries.  
In particular, for $n$ a positive integer and 
  with $M_n$ denoting the $n\times n$ matrices, an element $X\in M_n \otimes \cH_s(\cJ)$
 has the form,
\[
  X= X_0 \otimes B_0(\cJ^*,\cJ)  + X_1 \otimes B_1(\cJ^*,\cJ) + X_2\otimes B_2(\cJ^*,\cJ)
   \cong \begin{pmatrix}  X_0 & c X_1 \\ c X_1 & c^2 X_2 \end{pmatrix} \otimes I,
\]
  where the $X_j$ are $n\times n$ matrices and $I$ is the
  identity on the space that $\cJ$ acts upon. 
 In particular, if $X \succeq 0$, then each $X_j$ is self adjoint.
  Further, $X\succeq 0$ if and only if 
\[
 Y = \begin{pmatrix} X_0 & X_1 \\ X_1 & X_2 \end{pmatrix}
\]
 is too in which case there exists $n\times 2n$ matrices
 $Y_0$ and $Y_1$ such that $Y_j^* Y_k = X_{j+k}$. 

 To see that $\rho$ is completely positive,
  recall Equation \eqref{eq:Vs} and  observe
\[
 \begin{split}
  1_n\otimes \rho(X) = & \sum X_j \otimes B_j(T^*,T) \\
    = & X_0 \otimes V_0^* V_0 + X_1\otimes (V_0^* V_1 + V_1^* V_0) + X_2\otimes V_1^* V_1\\
    = & (Y_0\otimes V_0 + Y_1\otimes V_1)^* (Y_0\otimes V_0 + Y_1\otimes V_1).
 \end{split}
\]
\end{proof}

\begin{lemma}
 \label{lem:invunderpi}
   Suppose $\tJ$ acts on the Hilbert space $\tE$ and is of the form in Equation
   \eqref{eq:Jordanc}. If $E$ is also a Hilbert space and
  $\pi:B(\tE) \to B(E)$ is a unital $*$-representation, then
   $J=\pi(\tJ)$ has, up to unitary equivalence,
  the form in Equation \eqref{eq:Jordanc} too.
\end{lemma}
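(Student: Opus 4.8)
The plan is to read the shape in \eqref{eq:Jordanc} off as a package of $\ast$-algebraic relations that any unital $\ast$-representation automatically transports. Write the given $\tJ$ on $\tE=\tilde{\cF}\oplus\tilde{\cF}$ in the form \eqref{eq:Jordanc}, with $\tilde U$ the unitary appearing there, and let $\tilde P,\tilde Q$ be the orthogonal projections of $\tE$ onto its two summands. A one-line block computation gives $\tilde P+\tilde Q=I$, $\tilde P\tilde Q=0$, $\tilde Q\tJ\tilde P=0$, and $\tJ=\tW+c\,\tilde V$, where $\tW:=\tilde P\tJ\tilde P+\tilde Q\tJ\tilde Q$ and $\tilde V:=c^{-1}\tilde P\tJ\tilde Q$. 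Inspecting $2\times2$ blocks one checks that $\tW$ is unitary and commutes with $\tilde P$ and $\tilde Q$; that $\tilde V^2=0$, $\tilde V\tilde V^*=\tilde P$ and $\tilde V^*\tilde V=\tilde Q$ (so $\tilde V$ is a partial isometry whose initial and final projections are complementary); and that $\tW\tilde V=\tilde V\tW$.

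Apply $\pi$ and put $J=\pi(\tJ)$, $W=\pi(\tW)$, $V=\pi(\tilde V)$, $p=\pi(\tilde P)$, $q=\pi(\tilde Q)$. Since $\pi$ is a unital $\ast$-homomorphism, every relation above survives verbatim: $p,q$ are projections with $p+q=I_E$ and $pq=0$; $J=W+cV$ with $W$ unitary commuting with $p$ and $q$; $V^2=0$, $VV^*=p$, $V^*V=q$; and $WV=VW$. (Only the homomorphism and unital properties of $\pi$ are used; injectivity of $\pi$ is neither assumed nor needed.) Decompose $E=pE\oplus qE$. From $VV^*=p$ and $V^*V=q$ the operator $V$ vanishes on $pE$ and restricts to a unitary $V_0\colon qE\to pE$, i.e.\ $V=\bigl(\begin{smallmatrix}0&V_0\\0&0\end{smallmatrix}\bigr)$; since $W$ commutes with $p$ and $q$ it is block diagonal, $W=W_1\oplus W_2$ with $W_1,W_2$ unitary; and $WV=VW$ forces $W_1V_0=V_0W_2$, so $W_2=V_0^*W_1V_0$. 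Hence, with respect to this decomposition, $J=\bigl(\begin{smallmatrix}W_1&cV_0\\0&W_2\end{smallmatrix}\bigr)$.

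It remains to conjugate into the form \eqref{eq:Jordanc}. Conjugating by the unitary $pE\oplus qE\to pE\oplus pE$ that is the identity on $pE$ and $V_0$ on $qE$, and using $V_0W_2V_0^*=W_1$ and $V_0V_0^*=I_{pE}$, turns $J$ into $\bigl(\begin{smallmatrix}W_1&cI\\0&W_1\end{smallmatrix}\bigr)$ on $pE\oplus pE$; conjugating once more by $I_{pE}\oplus W_1^{*}$ turns that into $\bigl(\begin{smallmatrix}W_1&cW_1\\0&W_1\end{smallmatrix}\bigr)$. This is precisely the form \eqref{eq:Jordanc}, with unitary $U=W_1$ on the Hilbert space $\cF=pE=\ran\pi(\tilde P)$. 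Therefore $\pi(\tJ)$ is unitarily equivalent to an operator of that form.

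I do not expect a genuine obstacle: the content is simply that \eqref{eq:Jordanc} is cut out, up to unitary equivalence, by $\ast$-algebraic identities, and $\ast$-representations respect those. The one point to watch is the last conjugation: identifying $qE$ with $pE$ via $V_0$ only produces the shape $\bigl(\begin{smallmatrix}U&cI\\0&U\end{smallmatrix}\bigr)$, so one still needs the harmless twist by $I\oplus W_1^{*}$ on the second summand to move the corner entry from $cI$ to $cU$; everything else is routine $2\times2$ block algebra. (It is also worth recording in passing that the nilpotent part $cV$ of $J$ has norm at most $c$, in line with the bound in Theorem~\ref{thm:main}.)
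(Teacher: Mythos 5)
Your proof is correct and is essentially the paper's argument: both read \eqref{eq:Jordanc} off as a package of $*$-algebraic relations for the decomposition $\tilde J=\tilde W+c\tilde N$ (unitary plus $c$ times a square-zero partial isometry with complementary initial and final projections, commuting), note that a unital $*$-representation preserves these relations, and reassemble the $2\times 2$ block form on the other side. The only cosmetic difference is that you transport the coordinate projections directly as $\pi(\tilde P),\pi(\tilde Q)$ (which coincide with the paper's $NN^*$ and $N^*N$) and spell out the final unitary twist moving the corner entry from $cI$ to $cU$, a step the paper subsumes in ``up to unitary equivalence.''
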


\begin{proof}
 The operator $\tJ$ can be written as $\tW+c\tN$ where $\tW$ is unitary,
 $\tN^2=0$, $\tW\tN=\tN\tW$ and also $\tN^*\tN+\tN\tN^*=I$.   It follows that the same
  is true for $J =\pi(\tJ)$; i.e., $J= W +c N$
  where $W$ is unitary, $N$ is nilpotent of order two,
 $W$ and $N$ commute and $N^* N+ N N^*=I.$ 
  It is readily verified from these identities that $N N^*$
  and $N^* N$ are pairwise orthogonal projections. For instance,
\[
 N N^* = N (N N^* + N^* N) N^* = (N N^*)^2.
\]
  With respect to the orthogonal decomposition of  $E$
  determined by the projections $N N^*$ and $N^* N$
  and up to unitary equivalence,
\[
 N= \begin{pmatrix} 0 & I \\ 0 & 0 \end{pmatrix}.
\]
 Since $W$ commutes with $N$ it must have the form
\[
 W=\begin{pmatrix} U & 0 \\ 0 & U \end{pmatrix}.
\]
 Since $W$ is unitary, $U$ is unitary.  It now follows, that
  up to unitary equivalence, $J$ has the desired form. 
\end{proof}

\begin{proposition}
 \label{prop:premaininv}
    If $T$ in the class $\cFc$ and $T$ acts on the Hilbert space $H$,
  then there exists a Hilbert space $K$  an operator $J$ acting on $K$
  with the form in Equation \eqref{eq:Jordanc}
  and an isometry $V:H\to K$ such that $VT= J V$. 
\end{proposition}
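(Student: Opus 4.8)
The plan is to embed $T$ into a $*$-representation of the concrete symmetric Jordan operator $\cJ$ of Equation \eqref{eq:scriptJ} and then transport the normal form \eqref{eq:Jordanc} through that representation via Lemma \ref{lem:invunderpi}. Since the machinery of Section \ref{sec:arv} presupposes invertibility, I take $T$ to be invertible here; the passage to the general case is deferred to Section \ref{sec:notinv}.

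First I would check that $\cJ$ meets the standing hypotheses of Proposition \ref{prop:sym}. It is invertible, being block upper triangular with the unitary $S$ down the diagonal, and it is symmetric: if $U_t$ denotes the rotation unitary on $L^2(\mathbb T)$ with $e^{it}S=U_t^*SU_t$, then $e^{it}\cJ=(I_2\otimes U_t)^*\,\cJ\,(I_2\otimes U_t)$. Next, Lemma \ref{lem:factor} provides precisely the remaining input: because $T\in\cFc$, the map $\rho:\cH_s(\cJ)\to\cH_s(T)$ determined by $\rho(\cJ^{*\alpha}\cJ^\alpha)=T^{*\alpha}T^\alpha$ is well defined and completely positive. (Recall this rests on the facts that $\cH_s(\cJ)$ and $\cH_s(T)$ are spanned by $\{B_0,B_1,B_2\}$ since $\cJ$ and $T$ are $3$-isometries, together with the factorization $Q(T,s)=(V_0+sV_1)^*(V_0+sV_1)$ of Equation \eqref{eq:Vs}.)

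Now I apply Proposition \ref{prop:sym} to the pair $(T,\cJ)$: there are a Hilbert space $\cK$, a unital $*$-representation $\pi:B(\cF\oplus\cF)\to B(\cK)$ (with $\cF=L^2(\mathbb T)$, the space on which $\cJ$ acts), and an isometry $V:H\to\cK$ such that $VT^j=\pi(\cJ)^jV$ for every $j\in\mathbb Z$; taking $j=1$ gives $VT=\pi(\cJ)V$. Set $J=\pi(\cJ)$. By Lemma \ref{lem:invunderpi}, $J$ is, up to unitary equivalence, of the form in Equation \eqref{eq:Jordanc}; conjugating $V$ by the implementing unitary and renaming $\cK$ as $K$ then yields the desired Hilbert space $K$, operator $J$ of the form \eqref{eq:Jordanc}, and isometry $V:H\to K$ with $VT=JV$.

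There is no genuine obstacle at this stage, as all the substance has been packed into Lemmas \ref{lem:factor} and \ref{lem:invunderpi} and into Proposition \ref{prop:sym}; the only points requiring a moment's attention are verifying that $\cJ$ is simultaneously symmetric and invertible so that Proposition \ref{prop:sym} legitimately applies, and observing that the corner entry $cU$ in \eqref{eq:Jordanc} and the corner entry $cI$ produced in the proof of Lemma \ref{lem:invunderpi} give unitarily equivalent operators, so that the two descriptions of the normal form agree.
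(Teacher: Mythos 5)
Your proposal is correct and follows essentially the same route as the paper: take $\cJ$ from Equation \eqref{eq:scriptJ}, invoke Lemma \ref{lem:factor} for complete positivity, apply Proposition \ref{prop:sym} to get the representation $\pi$ and isometry $V$ with $VT^j=\pi(\cJ)^jV$, and finish with Lemma \ref{lem:invunderpi}. Your explicit restriction to invertible $T$ matches the paper's intent (the section treats the invertible case, with the general case reduced to it in Section \ref{sec:notinv}), and your closing remarks on the symmetry of $\cJ$ and the unitary equivalence of the $cI$ and $cU$ corner entries are just welcome elaborations of steps the paper leaves implicit.
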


\begin{proof}
 Choose $\cJ$ as in Equation \eqref{eq:scriptJ}.  By  Lemma \ref{lem:factor}, the
  mapping sending $\cJ^{*\alpha} \cJ^\alpha$ to $T^{*\alpha}T^\alpha$ is
  completely positive. Since $\cJ$ is symmetric and $T$ is invertible,
  Proposition \ref{prop:sym} implies 
  there exists a Hilbert space $K$ an isometry $V:\cK \to K$ and 
  a representation $\pi:B(\cK)\to B(K)$ such that $T^j V= V\pi(\cJ)^j$
  for all $j\in\mathbb Z$.  Finally, $J=\pi(\cJ)$ has the form
  of Equation \eqref{eq:Jordanc} by Lemma \ref{lem:invunderpi}.
\end{proof}

%

\section{Lifting to an Invertible $3$-isometry}
 \label{sec:notinv}

   Theorem \ref{thm:main}, save for the equality of spectra,
   follows immediately from the following Proposition
  together with Proposition \ref{prop:premaininv}.

  \begin{proposition}
  \label{lem:lifttoinv}
    If $T\in B(H)$ is in the class $\cFc$, then there
  is a Hilbert space $K$, an operator $Y\in B(K)$ 
  such that $Y$ is invertible and in the
  class $\cFc$ and an isometry $V:H\to K$ such that
   $VT^n = Y^n V$ for all natural numbers $n$.   
\end{proposition}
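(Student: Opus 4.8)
The plan is to embed the (possibly non‑invertible) $3$-isometry $T$ into an invertible one by a suitable inductive-limit / completion construction, engineered so that the $B_j$'s, and hence membership in $\cFc$, are preserved. The model to keep in mind is the dilation of an isometry to a unitary: one adjoins a backward orbit. Concretely, recall from \eqref{eq:3iso} that $P(n):=T^{*n}T^n = I + nB_1 + n^2 B_2$ makes sense as a formal expression for \emph{all} integers $n$, even when $T$ is not invertible; the issue is that the right‑hand side need not be positive semidefinite for $n<0$, so it cannot directly be $T^{-n\,*}T^{-n}$. However, if $T\in\cFc$ then $Q(T,1)\succeq 0$ gives $I-\tfrac{1}{c^2}B_2 + B_1 + B_2 = P(1) + I - \tfrac1{c^2}B_2 = T^*T + B_0 \succeq 0$ automatically, and more usefully $Q(T,s)\succeq 0$ for all real $s$ will be exactly what is needed to control the negative powers after we rescale. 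First I would set up the directed system $H \xrightarrow{T} H \xrightarrow{T} H \to \cdots$ with the inner products on the $k$-th copy twisted by $P(\cdot)$, form the separated completion $K$, and let $Y$ be the shift; by construction $Y$ is invertible on $K$ and the inclusion of the $0$-th copy is an isometry $V:H\to K$ intertwining $T^n$ with $Y^n$ for $n\ge 0$.

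The key steps, in order: (1) Define, for each $k\ge 0$, a possibly-degenerate sesquilinear form on $H$ by $\langle h,g\rangle_k := \langle P(k)\,h, g\rangle$ where we must first check $P(k)\succeq 0$ for $k\ge 0$ (this is immediate since $P(k)=T^{*k}T^k$) — but the real content is to check that the connecting maps are contractive and that the \emph{shift backwards} is isometric on the limit, which amounts to the identity $T^{*}P(k)T = P(k)$ rewritten appropriately, i.e. $P(k)$ viewed through $T$ on copy $k$ matches $P(k+1)$ on copy $k+1$. This is a direct consequence of $T^{*}(T^{*k}T^k)T = T^{*(k+1)}T^{k+1}$. (2) Take $K = \varinjlib$ with respect to these maps, quotient by the null space, complete; let $Y:K\to K$ be induced by the shift. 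Since every element of $K$ is represented on some copy $k$ and $Y$ moves it to copy $k-1$, $Y$ is bijective with bounded inverse once one verifies uniform bounds — here one uses that $P(k)$ grows only quadratically in $k$, so $\|Y^{-1}\|$ on the image of copy $k$ is controlled by $\|P(k)\|^{1/2}/\|P(k-1)^{1/2}\|$-type estimates, bounded uniformly. (3) Verify $Y$ is a $3$-isometry: since $V$ has dense range in the appropriate sense and $V T^n = Y^n V$ for $n\ge0$ with $V$ isometric, the identity $Y^{*3}Y^3 - 3Y^{*2}Y^2 + 3Y^*Y - I = 0$ is inherited from $T$ on a dense subspace, hence everywhere. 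Consequently $B_j(Y^*,Y)$ exist, and on the range of $V$ they restrict to $B_j(T^*,T)$. (4) Finally, show $Y\in\cFc$, i.e. $Q(Y,s)\succeq 0$ for all real $s$: this is the one place where $T\in\cFc$ (not merely $T$ a $3$-isometry) is used, and it should follow because $Q(Y,s)$, being built from $B_0,B_1,B_2$ of a $3$-isometry, is determined by its compression to the dense "forward orbit" subspaces, where it agrees with compressions of $Q(T,s)\succeq 0$; positivity passes to the closure.

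The main obstacle I expect is step (2): producing the invertible operator $Y$ on a genuine Hilbert space with $\|Y^{-1}\|<\infty$, rather than a merely formal shift on an incomplete or non‑Hausdorff space. One must choose the norms on the stages of the direct limit carefully — e.g. renormalize the $k$-th stage so the connecting maps are isometric onto their ranges and the inductive limit norm is honestly a norm — and then check that $Y^{-1}$, which "undoes" the shift, does not blow up; the quadratic growth of $P(k)$ is what saves this, but making it rigorous (and handling the null spaces of the degenerate forms $\langle\cdot,\cdot\rangle_k$ consistently across stages) is the delicate part. A secondary subtlety is that $\cFc$-membership of $Y$ must be arranged, not just $3$-isometry-ness; if the naive direct limit does not manifestly give $Q(Y,s)\succeq 0$, one may instead need to define the stage norms using $Q(T,\cdot)$ data directly (for instance tensoring with a shift as in the symmetrization of Section~\ref{sec:arv}, or padding $H$), so that positivity of $Q(Y,s)$ is built in from the start. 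I would first attempt the clean direct‑limit construction and fall back to the padded version only if the $\cFc$ condition resists.
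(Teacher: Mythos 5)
Your overall architecture is the paper's: adjoin a backward orbit to $H$ via a weighted direct limit (equivalently a two-sided family of copies of $H$), let $Y$ be the shift, $V$ the inclusion of the zeroth copy, and deduce boundedness, invertibility and $\cFc$-membership from quadratic-in-$n$ identities. But the central step is mis-specified. If the $k$-th copy is to represent $Y^{-k}Vh$, its semi-inner product must be $\langle h,g\rangle_k=\langle Q_+(T,-k)h,g\rangle$ with $Q_+(T,s)=I+sB_1(T^*,T)+s^2B_2(T^*,T)$, not $\langle T^{*k}T^k h,g\rangle$: with your weights the connecting maps $h\mapsto Th$ are not compatible with the forms (one gets $\langle Th,Tg\rangle_{k+1}=\langle T^{*(k+2)}T^{k+2}h,g\rangle$, not $\langle T^{*k}T^k h,g\rangle$), and the identity you invoke, $T^*P(k)T=P(k)$, is false; the correct compatibility is $T^*Q_+(T,n)T=Q_+(T,n+1)$ (Lemma \ref{lem:preliftinv1}). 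More importantly, positive semidefiniteness of the correct weights $Q_+(T,-k)=I-kB_1+k^2B_2$ is exactly where $T\in\cFc$ must enter, via $Q_+(T,s)=Q(T,s)+c^{-2}B_2(T^*,T)\succeq 0$ for all real $s$; it cannot be ``immediate,'' since a general $3$-isometry lying in no class $\cFc$ admits no such invertible extension. Your plan defers the use of $\cFc$ to step (4), but without it the Hilbert space of steps (1)--(2) does not exist.

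Two further points need operator inequalities rather than the heuristics you give. Bounded invertibility of $Y$ requires a uniform operator comparison of adjacent weights, e.g. $2(1+c^2)Q_+(T,s)-Q_+(T,s\pm1)=Q_+(T,s\mp1)+2c^2Q(T,s)\succeq 0$ (Lemma \ref{lem:omnibus}); a ratio of scalar norms $\|P(k)\|^{1/2}/\|P(k-1)\|^{1/2}$ does not control a shift between operator-weighted copies, and quadratic growth of $\|P(k)\|$ alone is not enough. Finally, in steps (3)--(4) the forward orbit $\{Y^nVh:n\ge 0\}$ is not dense in $K$ (the genuinely new vectors are the backward ones), so $Q(Y,s)\succeq 0$ must be verified on finite combinations supported on negative copies; there the computation reduces to $\langle Q(T,s-N)g,g\rangle\ge 0$ for suitable $g\in H$ and all real $s$ and $N$, i.e. once again to the full strength of the hypothesis $T\in\cFc$ (this is the paper's use of Lemma \ref{lem:quadratic}). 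With the weights corrected and these inequalities supplied, your outline becomes the paper's proof.
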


  The proof of the proposition occupies the remainder
  of this section.  Given $T$ in $\cFc$, let
\[
 Q_+(T,s) = I + sB_1(T^*,T) + s^2 B_2(T^*,T).
\]
 Thus, $Q_+(T,s) = Q(T,s) + c^{-2} B_2(T^*,T)\succeq 0$. 

\begin{lemma}
 \label{lem:omnibus}
   If $T$ is in the class $\cFc$, then 
\begin{enumerate}[(i)]
  \item $\|B_2(T^*,T)\|\le c^2$: 
  \item $\|B_1(T^*,T)\| \le 2c$;
  \item $\|T\|\le 1+c$; and 
  \item $2(1+c^2)Q_+(T,s)-Q_+(T,s\pm 1) \succeq 0$.
\end{enumerate}
\end{lemma}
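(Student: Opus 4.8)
The plan is to obtain (i)--(iii) by evaluating the positive quadratic $Q(T,\cdot)$ at convenient values of $s$, and to reduce (iv) to a parallelogram-type identity for $Q_+(T,\cdot)$ together with the observation that $Q_+(T,s)-c^{-2}B_2(T^*,T)=Q(T,s)$. Throughout write $B_j=B_j(T^*,T)$; each is selfadjoint, and since $T^{*n}T^n=I+nB_1+n^2B_2\succeq 0$ for every $n$, dividing by $n^2$ and letting $n\to\infty$ gives $B_2\succeq 0$.

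For (i), evaluating $Q(T,s)$ at $s=0$ gives $B_0(T^*,T)=I-c^{-2}B_2\succeq 0$, that is, $c^2I-B_2\succeq 0$; with $B_2\succeq 0$ this forces $\|B_2\|\le c^2$. For (ii), fix a unit vector $h$ and observe that $s\mapsto\langle Q(T,s)h,h\rangle=\langle B_0h,h\rangle+s\langle B_1h,h\rangle+s^2\langle B_2h,h\rangle$ is a nonnegative scalar quadratic in the real variable $s$ with nonnegative leading coefficient, so its discriminant is nonpositive:
\[
 \langle B_1h,h\rangle^2\le 4\,\langle B_0h,h\rangle\,\langle B_2h,h\rangle\le 4c^2,
\]
where the second inequality uses $0\le\langle B_0h,h\rangle\le 1$ (from $B_0\succeq 0$ and $I-B_0=c^{-2}B_2\succeq 0$) and (i). Hence $|\langle B_1h,h\rangle|\le 2c$ for all unit $h$, so $\|B_1\|\le 2c$ since $B_1$ is selfadjoint. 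For (iii), the case $n=1$ of \eqref{eq:3iso} gives $T^*T=I+B_1+B_2$, whence $\|T\|^2=\|T^*T\|\le 1+2c+c^2=(1+c)^2$.

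For (iv), put $q(s)=Q_+(T,s)=I+sB_1+s^2B_2$. Expanding both sides shows that, for every real $s$,
\[
 q(s+1)+q(s-1)=2q(s)+2B_2 .
\]
Moreover $q(s)-c^{-2}B_2=[I-c^{-2}B_2]+sB_1+s^2B_2=Q(T,s)\succeq 0$, so $c^2q(s)-B_2\succeq 0$, while $q(s\pm1)=Q_+(T,s\pm1)\succeq 0$ as noted just before the lemma. Using the identity to write $q(s\pm1)=2q(s)+2B_2-q(s\mp1)$, we get
\[
 2(1+c^2)q(s)-q(s\pm1)=2\bigl(c^2q(s)-B_2\bigr)+q(s\mp1)\succeq 0,
\]
which is precisely (iv).

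The only step needing more than bookkeeping is (iv), and the two facts that make it work --- the parallelogram identity for $q$ and the relation $q(s)-c^{-2}B_2=Q(T,s)$ --- are both immediate once written down; so I do not anticipate a genuine obstacle.
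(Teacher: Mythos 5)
Your proposal is correct and follows essentially the same route as the paper: (i)--(iii) come from positivity of the quadratic at $s=0$, the discriminant bound $\langle B_1h,h\rangle^2\le 4\langle\cdot\rangle\langle\cdot\rangle$ together with selfadjointness, and the $n=1$ case of \eqref{eq:3iso}, while (iv) rests on exactly the identity $2(1+c^2)Q_+(T,s)-Q_+(T,s\pm1)=Q_+(T,s\mp1)+2c^2Q(T,s)$ that the paper calls a straightforward computation (you derive it via the parallelogram relation $q(s+1)+q(s-1)=2q(s)+2B_2$, which is the same computation made explicit). No gaps.
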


\begin{proof}
  Since $Q(T,0)\succeq 0$, it follows that $0\preceq B_2(T^*,T) \preceq c^2 I$
  and item $(i)$ follows. 

  Since $Q_+(T,s)$ is positive semidefinite for all $s$, for each vector $x$,
\[
 |\langle B_1(T^*,T)x,x\rangle |^2 \le 4 \langle B_2(T^*,T) x,x\rangle \, \langle x,x\rangle. 
\]
  Since all the operators involved are selfadjoint it follows that
\[
 \|B_1(T^*,T)\|^2 \le 4 \|B_2(T^*,T)\|.
\]
  Hence, $\|B_1(T^*,T)\| \le 2c$ in view of $(i)$. 

 To prove item $(iii)$, observe,
\[
 \|T\|^2 =\|T^*T\|  \le 1 + \|B_1(T^*,T) \| + \|B_2(T^*,T)\| \le 1 + 2c + c^2 = (1+c)^2.
\]

 Straightforward computation reveals,
\[
2(1+c^2)Q_+(T,s)-Q_+(T,s\pm 1) = Q_+(T,s\mp 1)
   + 2c^2 Q(T,s)  \succeq 0,
\]
 proving item $(iv)$. 
\end{proof}

\begin{lemma}
 \label{lem:preliftinv1}
  If $T$ is a $3$-isometry, then for all 
  natural numbers $j$ and integers $n$,
\begin{equation}
 \label{eq:liftinv0}
  T^{*j} Q_+(T,n) T^j = Q_+(T,n+j).
\end{equation} 
  In particular,
\begin{equation}
 \label{eq:liftinv2}
  T^*B_2(T^*,T) T =B_2(T^*,T)
\end{equation}
 and 
\begin{equation}
 \label{eq:liftinv1}
 T^* B_1(T^*,T) T = \frac{T^{*2}T^2 - I}{2}.
\end{equation}
\end{lemma}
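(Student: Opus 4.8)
The plan is to derive Equation \eqref{eq:liftinv0} directly from the defining identity \eqref{eq:3iso} for a $3$-isometry, and then read off \eqref{eq:liftinv2} and \eqref{eq:liftinv1} as the special cases obtained by comparing coefficients of powers of $n$.

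First I would recall that for a $3$-isometry $T$ we have $T^{*k}T^k = I + kB_1 + k^2 B_2$ for every natural number $k$ (writing $B_j$ for $B_j(T^*,T)$), and hence $Q_+(T,n) = I + nB_1 + n^2B_2 = T^{*n}T^n$ whenever $n$ is a natural number. So for natural $n$ the left side of \eqref{eq:liftinv0} is $T^{*j}T^{*n}T^n T^j = T^{*(n+j)}T^{n+j} = Q_+(T,n+j)$, which is exactly the claim. To handle all integers $n$ (in particular negative ones, where the operator $Q_+(T,n)$ still makes sense as the quadratic $I + nB_1 + n^2B_2$ even when $T$ is not invertible), I would instead argue at the level of the quadratic polynomial in $n$: the identity $T^{*j}Q_+(T,n)T^j = Q_+(T,n+j)$ asserts equality of two operator-valued polynomials in $n$ of degree at most two, so it suffices to verify it at three values of $n$, say $n=0,1,2$, where $Q_+(T,n) = T^{*n}T^n$ and the computation above applies verbatim. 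Since two polynomials of degree $\le 2$ agreeing at three points are identical, \eqref{eq:liftinv0} holds for all $n$.

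For the two corollaries I would expand \eqref{eq:liftinv0} with $j=1$:
\[
 T^*(I + nB_1 + n^2 B_2)T = I + (n+1)B_1 + (n+1)^2 B_2 = (I + B_1 + B_2) + n(B_1 + 2B_2) + n^2 B_2.
\]
Equating coefficients of $n^2$ gives $T^*B_2 T = B_2$, which is \eqref{eq:liftinv2}. Equating coefficients of $n$ gives $T^*B_1 T = B_1 + 2B_2 = B_1 + (T^{*2}T^2 - 2T^*T + I)$ using \eqref{eq:B2}; then substituting $2B_1 = -T^{*2}T^2 + 4T^*T - 3$ from \eqref{eq:B1} and simplifying collapses the right side to $\tfrac12(T^{*2}T^2 - I)$, giving \eqref{eq:liftinv1}. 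Alternatively, and more cleanly, one can take $j=1$, $n=0$ and $j=1$, $n=1$ in \eqref{eq:liftinv0} to get $T^*Q_+(T,0)T = Q_+(T,1)$ and $T^*Q_+(T,1)T = Q_+(T,2)$, i.e. $T^*(I+B_1+B_2)T = I+2B_1+4B_2$ and $T^*T = I + B_1 + B_2$; subtracting suitable multiples isolates $T^*B_1T$ and $T^*B_2T$ in terms of $T^{*k}T^k$.

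I do not anticipate a genuine obstacle here; the only point requiring a little care is the passage from natural-number $n$ to integer $n$, which I handle by the polynomial-interpolation observation rather than by inverting $T$ (since $T$ need not be invertible in this section). Everything else is routine coefficient-matching using \eqref{eq:B1} and \eqref{eq:B2}.
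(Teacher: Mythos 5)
Your main argument is correct, but it runs in the opposite direction from the paper's. The paper first establishes the two particular identities \eqref{eq:liftinv2} and \eqref{eq:liftinv1} directly, by conjugating \eqref{eq:B2} and \eqref{eq:B1} by $T$ and using the cubic relation $T^{*3}T^3=3T^{*2}T^2-3T^*T+I$, and only then deduces \eqref{eq:liftinv0} for integer $n$ from the definition of $Q_+(T,n)$ (iterating in $j$). You instead prove \eqref{eq:liftinv0} for all integers $n$ at once: both sides are operator-valued polynomials of degree at most two in $n$ which agree at $n=0,1,2$, where $Q_+(T,n)=T^{*n}T^n$ by \eqref{eq:3iso}, hence agree identically; the identities \eqref{eq:liftinv2} and \eqref{eq:liftinv1} then fall out by comparing coefficients of $n^2$ and $n$ (your simplification $B_1+2B_2=\frac{T^{*2}T^2-I}{2}$ checks out). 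The interpolation observation is a clean substitute for the paper's explicit computation with the cubic identity and treats all $j$ simultaneously, at the price of deriving the ``in particular'' statements as corollaries rather than as self-contained computations; both routes are elementary and equally rigorous.

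One caveat: your ``alternatively, and more cleanly'' aside does not work as stated. The instances $j=1$, $n=0$ and $j=1$, $n=1$ of \eqref{eq:liftinv0} give only $T^*T=I+B_1+B_2$ and $T^*B_1T+T^*B_2T=B_1+3B_2$; this determines the sum but not $T^*B_1T$ and $T^*B_2T$ separately, so a third value of $n$ (say $n=2$), or the coefficient comparison you already performed, is needed. Since the main argument is complete, this only affects the aside.
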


\begin{proof}
  Equation \eqref{eq:liftinv0} is evident in the case that $n$ is
  also a natural number.  
  Equations \eqref{eq:liftinv2} and \eqref{eq:liftinv1}
  follow from Equation \eqref{eq:3iso} and
  Equations \eqref{eq:B2} and \eqref{eq:B1} respectively.
  From here Equation \eqref{eq:liftinv0} follows
  from the definition of $Q_+(T,n)$.
\end{proof}

\begin{lemma}
 \label{lem:quadratic}
   Suppose $T\in B(H)$. If for each $h\in H$ there exists scalars
   $b_j(h)$  such that for all natural
   numbers $\alpha$, 
\[
   \langle T^{*\alpha}T^\alpha h,h\rangle = b_0(h) + \alpha b_1(h) + \alpha^2 b_2(h),
\]
  then $T$ is a $3$-isometry.  If moreover,
\[
    b_0(h) -\frac{b_2(h)}{c^2} + s b_1(h) + s^2 b_2(h) \ge 0
\]
 for each $h$ and all real $s$, then $T\in\mathcal F_c$. 
\end{lemma}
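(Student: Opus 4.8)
The plan is a two-step argument: first extract the $3$-isometry identity from the quadratic-growth hypothesis by a finite-difference computation, and then identify $b_1(h)$ and $b_2(h)$ with the quadratic forms of $B_1(T^*,T)$ and $B_2(T^*,T)$, so that the second hypothesis becomes precisely the positive semidefiniteness of $Q(T,s)$.

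First I would fix $h\in H$ and write $f_h(\alpha)=\langle T^{*\alpha}T^\alpha h,h\rangle$. The hypothesis says that $f_h$ agrees on the non-negative integers with the polynomial $p_h(\alpha)=b_0(h)+\alpha b_1(h)+\alpha^2 b_2(h)$ of degree at most two; in particular the instance $\alpha=0$ records $b_0(h)=\langle h,h\rangle$ (with the convention $T^0=I$). Since the third forward difference of a polynomial of degree at most two vanishes identically, evaluating it at $\alpha=0$ gives
\[
\langle(T^{*3}T^3-3T^{*2}T^2+3T^*T-I)h,h\rangle=f_h(3)-3f_h(2)+3f_h(1)-f_h(0)=0.
\]
As $h$ was arbitrary and the operator $T^{*3}T^3-3T^{*2}T^2+3T^*T-I$ is selfadjoint, it must be $0$; that is, $T$ is a $3$-isometry.

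For the second step I would invoke Equation \eqref{eq:3iso}: now that $T$ is a $3$-isometry, $f_h(\alpha)=\langle h,h\rangle+\alpha\langle B_1(T^*,T)h,h\rangle+\alpha^2\langle B_2(T^*,T)h,h\rangle$ for all $\alpha$. This is a second polynomial of degree at most two agreeing with $p_h$ at infinitely many points, so the two coincide; comparing coefficients yields $b_0(h)=\langle h,h\rangle$, $b_1(h)=\langle B_1(T^*,T)h,h\rangle$, and $b_2(h)=\langle B_2(T^*,T)h,h\rangle$ (in particular all three are real). Substituting into the second hypothesis, for every $h$ and every real $s$,
\[
0\le b_0(h)-\frac{b_2(h)}{c^2}+sb_1(h)+s^2b_2(h)=\langle Q(T,s)h,h\rangle ,
\]
with $Q(T,s)$ as in Equation \eqref{eq:Q}. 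Since $Q(T,s)$ is selfadjoint, $Q(T,s)\succeq 0$ for all real $s$, i.e.\ $T\in\cFc$.

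I do not expect any genuine obstacle; the argument is routine. The one point that needs care is that the $\alpha=0$ instance of the hypothesis is essential: it is what pins $b_0(h)$ to $\langle h,h\rangle$, and without it (for instance if the quadratic identity were assumed only for $\alpha\ge1$) the first conclusion would fail, as a simple rank-one example on $\mathbb{C}^2$ already shows. So the write-up must make explicit use of $b_0(h)=\langle h,h\rangle$ in the finite-difference step.
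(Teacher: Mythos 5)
Your proof is correct and takes essentially the same route as the paper's: the vanishing of the quadratic form $\langle (T^{*3}T^3-3T^{*2}T^2+3T^*T-I)h,h\rangle$ via the third difference of a degree-two polynomial, concluded by polarization/selfadjointness, is exactly the paper's first step. Your identification of $b_j(h)$ with $\langle B_j(T^*,T)h,h\rangle$ and the resulting positivity of $Q(T,s)$ simply writes out in detail what the paper dismisses as ``easily seen'' for membership in $\cFc$.
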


\begin{proof}
  The first hypothesis imply that for each fixed $h\in H$, 
\[
  \langle T^{*3}T^3-3T^{*2}T^2 +3T^*T -I)h,h\rangle =0.
\]
  By polarization, it now follows that $T$ is a $3$-isometry. 
  The second hypothesis is easily seen to imply $T$ is in the class $\cFc$.
\end{proof}

\begin{proof}[Proof of Proposition \ref{lem:lifttoinv}]
  Let $\cV$ \index{$\cV$} denote a  vector space (over $\mathbb C$)
  with countable basis $\{e_j: j\in\bZ\}$  and let
  Let $\cK$ denote the vector space $\cV \otimes H$
  and let   Define a sesquilinear form
  on $\cK$ by 
\[
  [e_m\otimes h, e_n \otimes k]
   = \begin{cases} \langle Q_+(T,n)T^{m-n} h,k\rangle & \mbox{ if } n\le m; \\
                          \langle T^{*(n-m)} Q_+(T,m)h,k\rangle & \mbox{ if } m \le n.
     \end{cases}
\]
  To see that this form is positive semi-definite, fix  positive
  integers $N$ and $M$  and let 
\begin{equation}
 \label{eq:h}
  h = \sum_{m=-N}^M c_m e_m\otimes h_m \in \cK
\end{equation}
 by given.  Note that by Lemma \ref{lem:preliftinv1}, if $-N\le n$, then
\[
  Q_+(n)=Q_+(-N+(n+N)) = T^{*(n+N)} Q_+(-N) T^{n+N}.
\]
 Thus
\begin{equation}
 \label{eq:hh}
 \begin{split}
 [h,h] = & \sum_{m,n=-N}^M c_m c_n^* [e_m\otimes h_m,e_n\otimes h_n]\\
   = & \sum_{-N\le m\le n\le M} c_m c_n^* \langle T^{*(n-m)}Q_+(T,m) h_m,h_n\rangle 
      + \sum_{-N\le n<m\le M} c_m c_n^* \langle Q_+(T,n) T^{m-n} h_m,h_n \rangle \\
  =&   \sum_{-N\le m,n\le M} c_m c_n^* \langle T^{*(n+N)}Q_+(T,-N)T^{m+N} h_m,h_n\rangle \\
  = & \langle Q_+(T,-N) g ,g\rangle \ge 0,
 \end{split}
\end{equation}
 where 
\[
g=\sum_n c_n T^{n+N} h_n =\sum_{j=0}^{N+M}  c_{j-N} T^j h_{j-N}
\]
 and  the very last inequality follows from the assumption that $Q(T,-N)\succeq 0$.
 Now let $K$ denote the Hilbert space obtained from $\cK$ by moding out null vectors
  and then forming the completion. 

 Define $Y:\cK\to\cK$ by
\[
  Y h  = \sum_{m=-N}^M c_m e_{m+1}\otimes h_m = \sum_{m=-(N-1)}^{M+1} c_{m-1} e_{m}\otimes h_{m-1},
\]
 where $h\in \cK$ as in Equation \eqref{eq:h}.  From equation \eqref{eq:hh}
\[
 [Yh,Yh] = \langle Q_+(T,-N+1) g,g \rangle.
\]
 Hence another applications of Equation \eqref{eq:hh}, the definitions and Lemma \ref{lem:omnibus} give
\[
  2(1+c^2)  [ Y h, Y h ] - [ h,h ] 
   = \langle \left ( (2+c^2) Q_+(T,-N+1)-Q_+(T,-N) \right ) g,g\rangle \ge 0.
\]
  Thus $Y$ determines a bounded operator on $K$ (denoted also by $Y$).  Similarly one finds 
\[
   2(1+c^2)  [  h,  h ] - [ Yh,Yh ]
   = \langle \left ( 2(1+c^2) Q_+(T,-N)-Q_+(T,-N+1)\right ) g,g\rangle \ge 0.
\]
 Hence $Y$ has a bounded inverse. 

 To see that $Y\in\mathcal F_c$, observe, for
 natural numbers $\alpha,$ and with $\dot{h}$ denoting the class of $h$ in $K$, 
\[
 \begin{split}
  \langle Y^{*\alpha} Y^\alpha  \dot{h},\dot{h} \rangle 
   = & [Y^\alpha h,Y^\alpha h] \\
    = & \langle Q_+(T,-N+\alpha) g,g\rangle \\
    = & \langle \left ( Q_+(T,-N) +\alpha(B_1(T^*,T)-2NB_2(T^*,T)) +\alpha^2 B_2(T^*,T)\right ) g,g\rangle
\end{split} 
\]
  and moreover,
\[
 \begin{split}
  \langle Q_+(T,-N)g,g\rangle 
     +& s\langle (B_1(T^*,T)-2NB_2(T^*,T))g,g\rangle  + s^2 \langle B_2(T^*,T)) g,g \rangle
          - \frac{\langle B_2(T^*,T)g,g\rangle}{c^2} \\
  = & \langle Q(T,-N+s) g,g\rangle \ge 0
 \end{split}
\]
  and apply Lemma \ref{lem:quadratic} to conclude 
   $Y\in\mathcal F_c.$ 

   Now suppose $n\in\mathbb Z$ and $h\in H$ and observe
\begin{equation}
 \label{eq:notmuch}
 \| (e_n\otimes Th) - (e_{n+1}\otimes h)\|^2
   =  \langle  T^*Q(n)Th,h\rangle
      - 2 \langle T^* Q(n)Th,h\rangle + \langle Q(n+1)h,h\rangle 
   =  0.
\end{equation}
  Thus, $e_n\otimes Th= e_{n+1}\otimes h$ in $\mathcal K$ (they represent the same equivalence class).
 To finish the proof, define $V:H\to K$ by
  $Vh=e_0\otimes h$.  From Equation \eqref{eq:notmuch}
\[
   VTh = e_0\otimes Th = e_1\otimes h = YVh
\]
 and thus $VT=YV$.
\end{proof}

\begin{corollary}
 \label{cor:boundnormT}
   If $T$ is in the class $\cFc$, then 
\[
  \|T\|^2 \le 1+\frac{c^2}{2} + c\sqrt{1+\frac{c^2}{4}}.
\]
\end{corollary}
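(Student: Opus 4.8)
The plan is to estimate $\|T\|^2=\|T^*T\|=\sup_{\|h\|=1}\|Th\|^2$ one unit vector at a time, extracting a scalar quadratic inequality from the positivity of $Q(T,s)$ on each vector.

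First I would fix a unit vector $h\in H$ and set $\beta_j=\langle B_j(T^*,T)h,h\rangle$ for $j=1,2$, so that $\|Th\|^2=\langle T^*Th,h\rangle=1+\beta_1+\beta_2$ by \eqref{eq:3iso}. Since $B_2(T^*,T)\succeq 0$ (indeed $\beta_2=\lim_{n} n^{-2}\|T^nh\|^2\ge 0$ for any $3$-isometry) and $\|B_2(T^*,T)\|\le c^2$ by Lemma \ref{lem:omnibus}(i), we have $0\le\beta_2\le c^2$. Because $T\in\cFc$, the scalar quadratic $\langle Q(T,s)h,h\rangle=(1-c^{-2}\beta_2)+s\beta_1+s^2\beta_2$ is nonnegative for all real $s$; when $\beta_2>0$ this forces its discriminant to be nonpositive, i.e. $\beta_1^2\le 4\beta_2(1-c^{-2}\beta_2)$, whence (using $1-c^{-2}\beta_2\ge 0$) $|\beta_1|\le 2\sqrt{\beta_2(1-c^{-2}\beta_2)}$; when $\beta_2=0$ the same positivity forces $\beta_1=0$ and this inequality is trivial.

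Consequently $\|Th\|^2\le 1+\beta_2+2\sqrt{\beta_2(1-c^{-2}\beta_2)}\le\max_{0\le t\le c^2}g(t)$, where $g(t)=1+t+2\sqrt{t(1-t/c^2)}$. The last step is the elementary maximization of $g$ on $[0,c^2]$: one checks $g'(t)=1+(1-2t/c^2)/\sqrt{t(1-t/c^2)}>0$ for $t<c^2/2$, and that $g$ has a unique critical point $t^*=\frac{c^2}{2}\bigl(1+c/\sqrt{c^2+4}\bigr)$ in $(c^2/2,c^2)$, at which $2t^*/c^2-1=c/\sqrt{c^2+4}=\sqrt{t^*(1-t^*/c^2)}$, so that
\[
g(t^*)=1+t^*+\frac{2c}{\sqrt{c^2+4}}=1+\frac{c^2}{2}+\frac{c\sqrt{c^2+4}}{2}=1+\frac{c^2}{2}+c\sqrt{1+\frac{c^2}{4}}.
\]
Since $g(t^*)$ exceeds the endpoint values $g(0)=1$ and $g(c^2)=1+c^2$ (the latter because $c\sqrt{1+c^2/4}\ge c^2/2$), it is the maximum, and taking the supremum over unit vectors $h$ yields $\|T\|^2\le g(t^*)$, which is the asserted bound. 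The only remotely technical point is this one-variable optimization, and it is entirely routine; everything else is immediate from \eqref{eq:3iso}, Lemma \ref{lem:omnibus}, and the defining inequality of $\cFc$.
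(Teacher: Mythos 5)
Your proof is correct, but it follows a genuinely different route from the paper. The paper proves this corollary in one line from Theorem \ref{thm:main}: since $VT=JV$ with $V$ an isometry, $\|T\|\le\|J\|$, and the norm of $J$ in \eqref{eq:Jordanc} is computed exactly (the eigenvalues of $\begin{pmatrix}1&c\\0&1\end{pmatrix}^*\begin{pmatrix}1&c\\0&1\end{pmatrix}$ are $1+\tfrac{c^2}{2}\pm c\sqrt{1+\tfrac{c^2}{4}}$), so the stated constant is attained with equality by the model operator. You instead work directly from the definition of $\cFc$: for each unit vector $h$ you extract the scalar quadratic $\langle Q(T,s)h,h\rangle\ge 0$, use the discriminant to bound $|\langle B_1h,h\rangle|$ by $2\sqrt{\beta_2(1-\beta_2/c^2)}$, and then carry out a one-variable maximization over $\beta_2\in[0,c^2]$; all the steps (the case $\beta_2=0$, the location of the critical point $t^*=\tfrac{c^2}{2}(1+c/\sqrt{c^2+4})$, the comparison with the endpoint values, and the simplification of $g(t^*)$) check out. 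What each approach buys: the paper's argument is essentially free once the lifting theorem is in hand and makes sharpness transparent, whereas yours is completely elementary and independent of Theorem \ref{thm:main} --- it is in effect a sharpening of the estimate $\|T\|\le 1+c$ in Lemma \ref{lem:omnibus}(iii) and could be placed immediately after that lemma, which is a pleasant consistency check on the constant produced by the lifting theorem.
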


\begin{proof}
 The norm of $J$ as in Equation \eqref{eq:Jordanc} is easily 
  seen to satisfy the inequality (with equality). The result 
  then follows from Theorem \ref{thm:main}.
\end{proof}

\section{Spectral Considerations}
 \label{sec:spectral}
   In this section it is shown that,
  in the setting of  Proposition \ref{prop:premaininv},
  the operator $J$ 
  can be chosen to satisfy
  $\sigma(J)= \sigma(T)$.   

\begin{proposition}
 \label{prop:spectral}
   Suppose $T\in B(H)$ is in the class $\cFc$.  If $T$ is invertible,
  then there is a Hilbert space $\cE$, unitary operator $W\in B(\cE)$
  and an isometry $V:H\to \cE \oplus \cE$ such that $\sigma(W)=\sigma(T)$ 
  and 
\begin{equation}
 \label{eq:spectral}
  VT=\begin{pmatrix} W & cW \\ 0 & W\end{pmatrix} V.
\end{equation}

  If $T\in B(H)$ is not invertible, then $\sigma(T)=\overline{\mathbb D}$. 
\end{proposition}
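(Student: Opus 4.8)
I would treat the two assertions separately, and for the invertible case I would not build a new dilation but trim the Jordan lift already produced by Proposition~\ref{prop:premaininv} so that its unitary part has spectrum exactly $\sigma(T)$. So let $T\in\cFc$ be invertible. Proposition~\ref{prop:premaininv} furnishes a Jordan operator $J_0$ of the form \eqref{eq:Jordanc} on a Hilbert space $K$ and an isometry $V\colon H\to K$ with $VT=J_0V$; since $T$, and hence $J_0$, is invertible this gives $VT^n=J_0^nV$ for all $n\in\mathbb Z$. Write $J_0=W_0+cN_0$ with $W_0$ unitary, $N_0^2=0$, $W_0N_0=N_0W_0$ and $N_0^*N_0+N_0N_0^*=I$, as in Lemma~\ref{lem:invunderpi}. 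From $W_0N_0=N_0W_0$ one gets, on taking adjoints and inverses, that $N_0$ (and $N_0^*$) commutes with $W_0^*$ as well, so $N_0,N_0^*$ commute with every spectral projection $E(\Delta)$ ($\Delta\subseteq\mathbb T$ Borel) of $W_0$; hence each $E(\Delta)$ commutes with $J_0$ and $J_0^*$. Thus $\operatorname{Ran}E(\Delta)$ reduces $J_0$, it respects the two--by--two decomposition, and $J_0$ restricted to it is again a Jordan operator of the form \eqref{eq:Jordanc} whose unitary part has spectrum contained in $\overline{\Delta}$.

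The key step is to show $E(\mathbb T\setminus\sigma(T))V=0$. Since $\mathbb T\setminus\sigma(T)$ is open, it is a countable union of open arcs $I$ with $\overline I\cap\sigma(T)=\emptyset$, so by $\sigma$-additivity of $E$ it suffices to prove $E(I)V=0$ for one such $I$. Put $X=E(I)V\colon H\to\operatorname{Ran}E(I)$; because $E(I)$ commutes with $J_0$, the relation $VT=J_0V$ gives $XT=\bigl(J_0|_{\operatorname{Ran}E(I)}\bigr)X$, while $\sigma\bigl(J_0|_{\operatorname{Ran}E(I)}\bigr)\subseteq\overline I$ is disjoint from $\sigma(T)$. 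Rosenblum's theorem \cite{rosenblum} (equivalently, integrating the resolvent identity $(z-A)^{-1}X=X(z-T)^{-1}$, $A=J_0|_{\operatorname{Ran}E(I)}$, over a contour encircling $\sigma(A)$ but avoiding $\sigma(T)$) forces $X=0$. Hence $VH\subseteq\operatorname{Ran}E(\sigma(T))$. Since $\sigma(T)$ is closed and $E(\sigma(T))$ respects the two--by--two decomposition, $\operatorname{Ran}E(\sigma(T))=\cE\oplus\cE$ for a subspace $\cE$ reducing the unitary part, $J:=J_0|_{\cE\oplus\cE}$ is a Jordan operator of the form \eqref{eq:Jordanc} with unitary part $W$ (the argument in the proof of Lemma~\ref{lem:invunderpi} applies verbatim), and $\sigma(W)=\sigma(J)\subseteq\sigma(T)$. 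As $V$ now maps $H$ isometrically into $\cE\oplus\cE$ and still intertwines, \eqref{eq:spectral} holds.

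For the reverse inclusion I would use the Introduction's remark that an invertible $3$-isometry has $\sigma(T)\subseteq\mathbb T$; since $\mathbb T$ has empty interior in $\mathbb C$, $\sigma(T)=\partial\sigma(T)\subseteq\sigma_{\mathrm{ap}}(T)$. If $\lambda\notin\sigma(W)=\sigma(J)$ then $J-\lambda$ is invertible and $V(T-\lambda)=(J-\lambda)V$ yields $\|(T-\lambda)h\|\ge\|(J-\lambda)^{-1}\|^{-1}\|h\|$ for all $h$, so $T-\lambda$ is bounded below and $\lambda\notin\sigma_{\mathrm{ap}}(T)=\sigma(T)$. Hence $\sigma(T)\subseteq\sigma(W)$, and so $\sigma(W)=\sigma(T)$. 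For the non-invertible case (essentially Lemma~\ref{lem:specTnotinv}) the Introduction gives $\sigma(T)\subseteq\overline{\mathbb D}$, so it suffices to show $\mathbb D\subseteq\sigma(T)$, and I claim $\sigma(T)\cap\mathbb D$ is open in $\mathbb D$. If not, some $\mu\in\sigma(T)\cap\mathbb D$ is a limit of points of $\mathbb D\setminus\sigma(T)$, hence lies in $\partial\sigma(T)\subseteq\sigma_{\mathrm{ap}}(T)$; pick unit vectors $x_k$ with $(T-\mu)x_k\to0$. Factoring $T^n-\mu^n$ shows $\|T^nx_k\|^2\to|\mu|^{2n}$ for each fixed $n$. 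But $\|T^nx_k\|^2=1+n\langle B_1(T^*,T)x_k,x_k\rangle+n^2\langle B_2(T^*,T)x_k,x_k\rangle$ is, for each $k$, a quadratic polynomial in $n$ whose coefficients lie in fixed bounded sets (the $B_j(T^*,T)$ are bounded); passing to a subsequence along which the coefficients converge, the limit $1+nb_1+n^2b_2$ would equal $|\mu|^{2n}$ for all $n\ge0$, which is impossible since $|\mu|<1$. Thus $\sigma(T)\cap\mathbb D$ is both closed and open in the connected set $\mathbb D$, hence empty or all of $\mathbb D$; it is not empty, for then $0\notin\sigma(T)$ and $T$ would be invertible. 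Therefore $\mathbb D\subseteq\sigma(T)$ and $\sigma(T)=\overline{\mathbb D}$.

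The main obstacle is the trimming step for invertible $T$: recognizing $\operatorname{Ran}E(\sigma(T))$ as a reducing subspace on which $J_0$ remains a Jordan operator of the required form, and proving $E(\mathbb T\setminus\sigma(T))V=0$ via the disjoint--spectra (Rosenblum) argument. Once that is in hand, the matching inclusion $\sigma(T)\subseteq\sigma(W)$ and the entire non-invertible case are comparatively soft.
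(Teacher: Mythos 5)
Your argument is correct, and it reaches the conclusion by a genuinely different route at each of the three substantive points, so it is worth recording how the two proofs differ. For the trimming step the paper (Lemma \ref{lem:Ben0} together with Lemmas \ref{lem:Ben1} and \ref{lem:Ben2}) constructs, for each centered arc $A$ disjoint from $\sigma(T)$, an explicit function $f=\alpha(\zeta-t\lambda)^{-1}$ with $|f|\geq 1$ on $A$ and $|f|<1$ on a contour surrounding $\sigma(T)$, and then pushes $Vf^n(T)=f^n(J)V\to 0$ through the positive compressions of $B_2(J^*,J)$ and $I-c^{-2}B_2(J^*,J)$ to kill $E(A)V_0$ and $E(A)V_1$ separately; you instead observe that the spectral projections of the unitary part commute with $J_0$ (and with $N_0$, $N_0^*$), so that $X=E(I)V$ satisfies the intertwining $\bigl(J_0|_{\ran E(I)}\bigr)X=XT$ with $\sigma\bigl(J_0|_{\ran E(I)}\bigr)\subseteq\overline{I}$ disjoint from $\sigma(T)$, and invoke Rosenblum's disjoint-spectra theorem, for which your contour-integral sketch is a complete proof (note that the paper's reference \cite{rosenblum} is cited for Fej\'er--Riesz factorization; cite Rosenblum's 1956 paper or keep the contour argument). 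This is shorter, treats both components of $V$ at once, and replaces a bespoke geometric construction by a standard tool; the only care needed is the one you implicitly take, namely choosing the open arcs $I$ with $\overline{I}\cap\sigma(T)=\emptyset$ (the maximal complementary arcs have endpoints in $\sigma(T)$, so one exhausts each of them by smaller arcs before applying the $\sigma$-additivity/Lemma \ref{lem:Ben2} step), and the observation, as in Lemma \ref{lem:invunderpi}, that the restriction to $\ran E(\sigma(T))$ is again of the form \eqref{eq:Jordanc}. For the inclusion $\sigma(T)\subseteq\sigma(W)$ the paper produces a left inverse $V^*(\mathfrak{J}-\lambda)^{-1}V$ and upgrades it to a two-sided inverse by approximating $\lambda$ by points off the circle, while you use $\sigma(T)=\partial\sigma(T)\subseteq\sigma_{\mathrm{ap}}(T)$ (valid since $\sigma(T)\subseteq\mathbb{T}$) plus boundedness below from $V(T-\lambda)=(J-\lambda)V$; the two are equivalent, yours a little slicker. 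Finally, for non-invertible $T$ the paper follows Agler--Stankus (Proposition \ref{lem:specTnotinv}): the M\"obius-type operator $S=(I-\ol T)(T-\lambda)^{-1}$ is an invertible $3$-isometry, so spectral mapping forces $\sigma(T)\subseteq\mathbb{T}$, contradicting non-invertibility; you instead show $\partial\sigma(T)\cap\mathbb{D}=\emptyset$ by testing the quadratic $1+nb_1+n^2b_2$ against $|\mu|^{2n}\to 0$ at approximate eigenvectors and conclude by connectedness of $\mathbb{D}$. Both are correct; the paper's route needs only one algebraic identity once $S$ is checked to be a $3$-isometry, whereas yours avoids that computation and uses nothing beyond the defining quadratic growth of $\|T^n x\|^2$, and in fact, like the paper's, proves the statement for arbitrary $3$-isometries, not just members of $\cFc$.
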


 Before turning to the proof of this proposition, we state the other
 main result of the section. 

\begin{proposition}[\cite{aglerstankusi}]
 \label{lem:specTnotinv}
   If $T$ is a non-invertible $3$-isometry, then $\sigma(T)=\overline{\mathbb D}$.
\end{proposition}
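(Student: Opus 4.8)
The plan is to show, under the hypothesis that $T$ is a $3$-isometry that is not invertible, that $\mathbb D\subseteq\sigma(T)$; since we already know $\sigma(T)\subseteq\overline{\mathbb D}$, this suffices. The argument has three ingredients: (i) every $3$-isometry is bounded below; (ii) the approximate point spectrum $\sigma_{\mathrm{ap}}(T)$ of a $3$-isometry lies in the unit circle $\partial\mathbb D$; and (iii) a short point-set argument combining (i), (ii), and the fact that $0\in\sigma(T)$.

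For (i), recall first that $B_2(T^*,T)\succeq 0$, since $\langle B_2(T^*,T)h,h\rangle=\lim_{n\to\infty}n^{-2}\|T^nh\|^2\ge 0$, and recall the identity $T^*B_2(T^*,T)T=B_2(T^*,T)$ from Equation \eqref{eq:liftinv2}. If $T$ were not bounded below, choose unit vectors $h_k$ with $\|Th_k\|\to 0$. The identity gives $\langle B_2(T^*,T)h_k,h_k\rangle=\langle B_2(T^*,T)Th_k,Th_k\rangle\to 0$, and then $\|Th_k\|^2=1+\langle B_1(T^*,T)h_k,h_k\rangle+\langle B_2(T^*,T)h_k,h_k\rangle\to 0$ forces $\langle B_1(T^*,T)h_k,h_k\rangle\to -1$; but then $\|T^2h_k\|^2=1+2\langle B_1(T^*,T)h_k,h_k\rangle+4\langle B_2(T^*,T)h_k,h_k\rangle\to -1<0$, which is absurd. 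So $T$ is bounded below; being bounded below but not invertible, $T$ is not surjective, whence $0\in\sigma(T)$.

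For (ii), let $\lambda\in\sigma_{\mathrm{ap}}(T)$ with $\lambda\ne 0$ and pick unit vectors $h_k$ with $\|(T-\lambda)h_k\|\to 0$. Writing $T^n-\lambda^n=\bigl(\sum_{j=0}^{n-1}\lambda^jT^{n-1-j}\bigr)(T-\lambda)$, where $\sum_{j=0}^{n-1}\lambda^jT^{n-1-j}$ is a fixed bounded operator for each $n$, one gets $\|T^nh_k\|\to|\lambda|^n$, hence $\|T^nh_k\|^2\to|\lambda|^{2n}$, as $k\to\infty$, for every fixed natural number $n$. Passing to a subsequence along which $\langle B_1(T^*,T)h_k,h_k\rangle\to\beta_1$ and $\langle B_2(T^*,T)h_k,h_k\rangle\to\beta_2\ge 0$, Equation \eqref{eq:3iso} yields $|\lambda|^{2n}=1+n\beta_1+n^2\beta_2$ for all $n\ge 1$. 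Since $|\lambda|\le 1$ keeps the left-hand side bounded, necessarily $\beta_2=0$; comparing $n=1$ and $n=2$ then gives $(|\lambda|^2-1)^2=0$, so $|\lambda|=1$. Since (i) rules out $\lambda=0$, this proves $\sigma_{\mathrm{ap}}(T)\subseteq\partial\mathbb D$. (Note this step never used non-invertibility, consistent with the invertible case.)

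Finally, for (iii), use the standard fact $\partial\sigma(T)\subseteq\sigma_{\mathrm{ap}}(T)$, so that $\partial\sigma(T)\subseteq\partial\mathbb D$. Consequently every point of $\sigma(T)$ lying in the open disc is an interior point of $\sigma(T)$, so $\sigma(T)\cap\mathbb D$ is open in $\mathbb C$; it is also closed in $\mathbb D$ and contains $0$, so by connectedness of $\mathbb D$ it equals $\mathbb D$. Thus $\mathbb D\subseteq\sigma(T)$, and closedness of $\sigma(T)$ gives $\sigma(T)=\overline{\mathbb D}$. I expect the only step requiring real care to be (ii): the passage to a subsequence along which the numerical quantities $\langle B_j(T^*,T)h_k,h_k\rangle$ converge, together with the observation that a real quadratic in $n$ that stays bounded on the positive integers must have vanishing leading coefficient. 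Steps (i) and (iii) are then essentially formal, the latter resting only on $\partial\sigma(T)\subseteq\sigma_{\mathrm{ap}}(T)$ and the connectedness of $\mathbb D$.
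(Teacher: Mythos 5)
Your proof is correct, but it takes a genuinely different route from the paper. The paper argues by contradiction with a M\"obius-transform trick: if $\lambda\in\mathbb D$ and $T-\lambda$ were invertible, then $S=(I-\overline{\lambda}T)(T-\lambda)^{-1}$ is verified, by a direct algebraic identity, to be an invertible $3$-isometry; since invertible $3$-isometries have spectrum in the unit circle, the spectral mapping theorem forces $\sigma(T)\subset\mathbb T$, contradicting non-invertibility, so every point of $\mathbb D$ lies in $\sigma(T)$. You instead work with the approximate point spectrum: you show every $3$-isometry is bounded below (via $B_2(T^*,T)\succeq 0$ and $T^*B_2(T^*,T)T=B_2(T^*,T)$, i.e.\ Equation \eqref{eq:liftinv2}, which indeed holds without invertibility), show that nonzero approximate eigenvalues are unimodular by comparing $\|T^nh_k\|^2\to|\lambda|^{2n}$ with the quadratic growth in Equation \eqref{eq:3iso}, and then finish with $\partial\sigma(T)\subset\sigma_{\mathrm{ap}}(T)$ and connectedness of $\mathbb D$. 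All steps check out: the subsequence extraction is legitimate since the sequences $\langle B_j(T^*,T)h_k,h_k\rangle$ are bounded, and a quadratic in $n$ bounded on the positive integers must have zero leading coefficient. Your argument is closer in spirit to the original Agler--Stankus analysis of the approximate point spectrum of $m$-isometries and is more elementary and self-contained (no spectral mapping for M\"obius functions, no verification that a M\"obius transform of a $3$-isometry is again a $3$-isometry); the paper's argument is shorter once one accepts the displayed algebraic identity for $S$ and the standing fact that invertible $3$-isometries have unitary-like spectrum, and it localizes the issue at a single point $\lambda$ rather than invoking a boundary-plus-connectedness argument.
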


\begin{proof}
 Recall from the introduction that for any three isometry  $\sigma(T) \subset \overline{\mathbb D}$
  and the $3$-isometry $T$ is invertible if and only if $\sigma(T)\subset \mathbb T$.

 Suppose $\lambda \in\mathbb D$ and $T-\lambda$ is invertible. Let
\[
  S = (I-\ol T) (T-\lambda)^{-1}.
\]
 That $S$ is a $3$-isometry follows from directly calculating
\[
 \begin{split}
 0 = & (I-\lambda T^*)^3 (I-\ol T)^3 - 3 (T^*-\ol) (I-\lambda T^*)^2 (I-\ol T)^2 (T-\lambda)\\
      &   + 3 (T^*-\ol)^2 (I-\lambda T^*)(I-\ol T)(T-\lambda)^2
       - (T^*-\ol)^3 (T-\lambda)^3.
\end{split}
\]
  By inspection, $S$ is invertible. Thus $\sigma(S)\subset \mathbb T$.
   By spectral mapping the $\phi(\sigma(T)) = \sigma(S)$ where
\[
 \phi(\zeta) = (\zeta -\lambda) (1-\ol \zeta)^{-1}.
\]
 Hence $\sigma(T) \subset \mathbb T$ too.
\end{proof}

\begin{remark}\rm
 \label{lem:morespecT}
    In the case that $T\in\mathcal F_c$ (for some $c$)
    it is possible to use Theorem \ref{thm:main} to prove
    Proposition \ref{lem:specTnotinv}.  Indeed, 
    if $\phi$ is analytic in a neighborhood of $\obD$ and is
    unimodular on the boundary of $\mathbb D,$ then that
    $\phi(T)$ is a $3$-isometry can be seen from 
    $V\phi(T) = \phi(J)V$ and 
 \[
  \phi(J) = \begin{pmatrix} \phi(U) & U\phi^\prime(U) \\ 0 & \phi(U) \end{pmatrix},
 \]
  since in this case $\phi(J)$ is evidently a unitary plus commuting nilpotent of order two.
\end{remark}

 The remainder of this section contains a proof Proposition \ref{prop:spectral}
 and a brief subsection on the functional calculus for use in the next section

\subsection{A proof of Proposition \ref{prop:spectral}}
 Assuming $T$ is invertible, by Theorem \ref{prop:premaininv}, 
  there is a unitary operator $U$ acting on a Hilbert space $\cF$
 and an isometry $V:H\to\cF\oplus\cF$ such that $VT=JV$, where
\[
  J= \begin{pmatrix} U & cU \\ 0 & U \end{pmatrix}.
\]  
 The aim is to show that $U$ can be replaced by $W= (I-P)U(I-P)$, where
 $P$ is the spectral projection for the complement set $\sigma(T)$ associated to
 the unitary (normal) operator $U$ (so that $I-P$ is the spectral projection
 corresponding to $\sigma(T)$).


 Of course if $\sigma(T) =\mathbb T$, then
 there is nothing to prove.
 Otherwise, consider a nonempty closed arc $A$ in $\mathbb{T}\setminus \sigma(T)$ 
 and, for the purposes of this construction, 
 suppose the end points $\zeta^\prime$ and $\zeta^{\prime \prime}$ 
 of the arc $A$ are  equidistant from $\sigma(T).$
 We shall call this a centered arc. 
 Let $\lambda$ be the midpoint of the arc, and choose a $t>1$. Consider the following diagram.
\begin{center}
\begin{tikzpicture}[scale=1]

\draw[black,thin](0,0) circle (2);

\draw[black, ->](0,0)--(3,0);

\draw[blue, very thick](2,0) arc(0:60:2);
\draw[blue,very thick](2,0) arc(0:-60:2);
\draw[black](60:1.9)--(60:2.1);
\draw[black](-60:1.9)--(-60:2.1);

\draw[darkgray,very thick](-2,0) arc(0:-115:-2);
\draw[darkgray, very thick](-2,0) arc(0:115:-2);

\draw[darkgray,dashed](-1.85,0) arc(0:-120:-1.85);
\draw[darkgray,dashed](-2.15,0) arc(0:-120:-2.15);
\draw[darkgray, dashed](-2.15,0) arc(0:120:-2.15);
\draw[darkgray, dashed](-1.85,0) arc(0:120:-1.85);
\draw[darkgray, dashed](120:-1.85)--(120:-2.15);
\draw[darkgray, dashed](-120:-1.85) to (-120:-2.15);

\draw[red, ->] (0,0)--(1.8, .85);
\draw[red, dashed](2.5,0)--(1.8, .85);
\draw[red, ->](0,0)--(2.5,0);
\node[below, red] at (1,0){\small $t\lambda$};
\node[right, black] at (1.98,.1){\small $\lambda$};
\node[above, red] at (1,.45){\small $\zeta$};

\draw[fill=white] (2,0) circle (0.035);
\draw[fill=white] (0,0) circle (0.035);

\node[left, darkgray] at (-2.1,0){\small $\sigma(T)$};
\node[left, darkgray] at (60:2.4){\small $\Gamma$};

\end{tikzpicture}
\end{center}
A geometric argument shows, for $t>1$ fixed, that 
\[
  \inf_{\zeta\in A}\left| \frac{1}{\zeta-t\lambda}\right|
     =\left|\frac{1}{\zeta^\prime-t\lambda}\right|
     = \left|\frac{1}{\zeta^{\prime\prime}-t\lambda}\right|.
\]
 Choose an $\alpha$ such that 
\[\left|\ \frac{\alpha}{\zeta_i-t\lambda}\right|=1\]
 and let $f(\zeta)=\alpha(\zeta-t\lambda)^{-1}$.
Choose a contour $\Gamma$ with $\sigma(T)$ on its inside and the arc $A$
 on its outside (the bounded and unbounded components determined by $\Gamma$ respectively)
  and such that the modulus of $f$ is less than one
 on and inside $\Gamma$.

   Since $f$ is analytic in a neighborhood of the
  closed unit disc and the spectrum of $J$ is 
  in $\mathbb T$, the expression $f(J)$ can be
  defined as a convergent power series.  On the
  other hand $f(U)$ can be defined in terms of
  power series or by the Borel functional calculus, 
  since $U$ is unitary (and hence normal).
  Of course both give the same value for $f(U)$.  It is
  straightforward to verify 
\[
  f(J) = \begin{pmatrix} f(U) & f^\prime(U) \\ 0 & f(U) \end{pmatrix}.
\]

  Now $f(T)$ can be defined as a convergent power series or 
 by the Riesz functional calculus,
\[
 f(T) = \frac{1}{2\pi i} \int_\Gamma f(z)(z-T)^{-1} dz.
\]

 Write, with respect to the decomposition $K=\mathcal F\oplus \mathcal F$,
\[
  V= \begin{pmatrix} V_1 \\ V_0 \end{pmatrix}.
\]
  Let $E$ denote the spectral measure for the unitary operator $U$.
 Thus, for any Borel set $B\subset \mathbb T$ the projection $E(B)$
  and $U$ commute and moreover,
\[
 U = \int_{\mathbb T} \lambda \, dE(\lambda).
\]

\begin{lemma}
\label{lem:Ben0}
  If $A$ is a closed centered arc such that the $A \cap \sigma(T)=\emptyset,$ then $E(A)V_\ell=0$ for $\ell=0,1$.
\end{lemma}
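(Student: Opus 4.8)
The plan is to exploit the function $f(\zeta)=\alpha(\zeta-t\lambda)^{-1}$ built in the construction preceding the lemma. It is analytic on a neighborhood of $\overline{\mathbb{D}}$, its only pole being at $t\lambda$ with $|t\lambda|=t>1$, and it carries the two inequalities recorded there: on and inside $\Gamma$ one has $|f|<1$, so in particular, since $\sigma(T)$ is a compact subset of the inside of $\Gamma$, $r:=\sup_{\zeta\in\sigma(T)}|f(\zeta)|<1$; while on the arc $A$ one has $|f(\zeta)|\ge 1$, the latter being precisely the content of the ``centered arc'' geometric computation together with the normalization $|\alpha/(\zeta_i-t\lambda)|=1$ at the endpoints. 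First I would note that $f(T)^n\to 0$ in operator norm: by the spectral mapping theorem $\sigma(f(T))=f(\sigma(T))$, so the spectral radius of $f(T)$ equals $r<1$, whence $\|f(T)^n\|^{1/n}\to r$ by Gelfand's formula.

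Next I would transport this to $J$. Since the Taylor series of $f$ converges uniformly on $\overline{\mathbb{D}}$, which contains both $\sigma(T)$ and $\sigma(J)\subseteq\sigma(U)\subset\mathbb{T}$, the intertwining relations $VT^k=J^kV$ for $k\in\mathbb{Z}$ (Proposition \ref{prop:premaininv} and invertibility of $T$) pass to the limit to give $Vf(T)^n=f(J)^nV$ for every $n$, so $f(J)^nVh\to 0$ for each $h\in H$. Now I would use the form $f(J)=\begin{pmatrix} f(U) & \star \\ 0 & f(U)\end{pmatrix}$ from the construction, in which $\star$ is a function of the single operator $U$. Because all entries are functions of $U$ and the strictly upper block squares to $0$, an easy induction yields $f(J)^n=\begin{pmatrix} f(U)^n & \star_n \\ 0 & f(U)^n\end{pmatrix}$ with $\star_n$ again a function of $U$. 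Writing $V=\begin{pmatrix} V_1 \\ V_0\end{pmatrix}$ and reading off the two components of $f(J)^nVh$, I obtain $f(U)^nV_0h\to 0$ and $f(U)^nV_1h+\star_n V_0h\to 0$.

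Finally I would bring in the spectral projection $E(A)$, which commutes with $U$, hence with $f(U)$ and with every $\star_n$. From $f(U)^nV_0h\to 0$ I get $f(U)^nE(A)V_0h\to 0$; but for any $g$ in the range of $E(A)$, normality of $U$ and $|f|\ge 1$ on $A$ give $\|f(U)^ng\|^2=\int_A|f(\zeta)|^{2n}\,d\langle E(\zeta)g,g\rangle\ge\|g\|^2$, so taking $g=E(A)V_0h$ forces $E(A)V_0h=0$. Applying $E(A)$ to the second limit relation then kills the $\star_n$ term, since $E(A)\star_nV_0h=\star_nE(A)V_0h=0$, leaving $f(U)^nE(A)V_1h\to 0$; the same lower bound forces $E(A)V_1h=0$. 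As $h\in H$ is arbitrary, $E(A)V_\ell=0$ for $\ell=0,1$.

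I expect the main delicate point to be the bookkeeping for the functional calculi rather than any substantial difficulty: the identity $Vf(T)=f(J)V$ should be obtained from the uniformly convergent power series and not from a Riesz integral over $\Gamma$, because $\Gamma$ encloses $\sigma(T)$ but not $\sigma(J)=\mathbb{T}$ and so is not a legitimate contour for $f(J)$. The only other thing to watch is that $|f|\ge 1$ holds on all of $A$, not merely at its endpoints; this is exactly where the hypothesis that $A$ is a centered arc, through the displayed infimum computation, enters.
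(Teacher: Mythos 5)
Your proof is correct and is essentially the paper's argument: both rest on $f^n(T)\to 0$ in norm, the intertwining $Vf^n(T)=f^n(J)V$, the upper-triangular form of $f(J)^n$ with off-diagonal entries that are functions of $U$ (hence commute with $E(A)$), and the lower bound $|f|\ge 1$ on the centered arc $A$, yielding first $E(A)V_0=0$ and then $E(A)V_1=0$. The only differences are cosmetic: you obtain $f^n(T)\to 0$ from the spectral mapping theorem and Gelfand's formula rather than the uniform estimate on $\Gamma$, and you read off the two components of $f(J)^nVh$ directly instead of the paper's device of sandwiching $B_2(J^*,J)$ and $I-\frac{1}{c^2}B_2(J^*,J)$ between the projections $0\oplus P$ and $P\oplus 0$.
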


\begin{proof}
 From the Riesz functional calculus, $f^n(T)$ converges to $0$
 in the operator norm since $f^n$ converges to $0$ uniformly
  on $\Gamma$. 
 On the other hand,
\[
  Vf^n(T) = f^n(J)V
\]
  and hence $f^n(J)V$ also tends to $0$.

 Let $P$ denote
 the spectral projection (for $U$) corresponding to the arc $A,$
\[
 P = \int_{A} dE = E(A).
\]
 Consider, with respect to the decomposition $K=\mathcal F\oplus \mathcal F$,
\[
 0\oplus P = \begin{pmatrix} 0 & 0 \\ 0 & P \end{pmatrix}
\]
  and similarly $P\oplus 0$.  Because $f^n(J)V$  tends to $0$ in operator norm, so do both
 \[
   V^* f^n(J)^* (0\oplus P) B_2(J^*,J) (0\oplus P) f^n(J) V
\]
 and
\[
  V^* f^n(J)^* (P\oplus 0)[I-\frac{1}{c^2}  B_2(J^*,J)] (P\oplus 0) f^n(J)V.
\]

 Straightforward computation shows
\[ 
 \begin{split}
  \frac{1}{c^2} f^n(J)^* (0\oplus P) B_2(J^*,J) (0\oplus P) f^n(J)
   = & \begin{pmatrix} f^n(U)^* & 0 \\ * & f^n(U)^* \end{pmatrix} 
       \begin{pmatrix} 0 & 0 \\ 0 & P \end{pmatrix} 
       \begin{pmatrix} f^n(U) & * \\ 0 & f^n(U) \end{pmatrix} \\
 = & \begin{pmatrix} 0 & 0 \\ 0 & f^n(U)^* P f^n(U) \end{pmatrix}.
\end{split}
\]
 It follows that $Pf^n(U)V_0$ tends to $0$. 
 On the other hand, $Pf^n(U)= f^n(U)P$ since $P$ is a spectral
  projection. 
 Consequently, using the Riesz functional calculus,
\[
 V_0^*P |f^n|^2(U) PV_0 =  V_0^* f^n(U)^* P f^n(U) V_0
\]
  also tends to $0$.  On the other hand, $P|f^n|^2 P \ge P$ since $|f^n|\ge 1$ on 
  the support $A$ of $P$.  Thus $PV_0 =0$; i.e., the range of $V_0$ lies in the
  range of $I-P$.

 Similarly,
\[
 f^n(J)^* (P\oplus 0) [I-\frac{1}{c^2} B_2(J^*,J)] (P\oplus 0) f^n(J)
   = \begin{pmatrix} P & 0\\0 & 0 \end{pmatrix}
    \begin{pmatrix} f^n(U)^* f^n(U) & * \\ * & * \end{pmatrix}
   \begin{pmatrix} P & 0\\0 & 0 \end{pmatrix}.
\]
 Hence, using the already established $PV_0=0$, 
\[
  V^* f^n(J)^* (P\oplus 0)(I-\frac{1}{c^2} (P\oplus 0) B_2(J^*,J)) f^n(J) V
  =   V_1^* P  f^n(U)^* f^n(U)  P V_1
\]
 converges to $0.$ Thus $PV_1=0$.
 \end{proof} 
 
 Note that
\[
  J V = \begin{pmatrix} W & cW \\ 0 & W \end{pmatrix} V,
\]
 where $W=(I-P)U(I-P)$ is a unitary operator on the Hilbert space
  $(I-P)\mathcal F$. 
   
\begin{lemma}
 \label{lem:Ben1}
   If $A\subset\mathbb T$ is a closed arc in the complement of $\sigma(T)$, then
  $E(A)V_\ell =0$ for $\ell=1,2$.
\end{lemma}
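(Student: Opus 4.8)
The plan is to bootstrap from Lemma \ref{lem:Ben0}, which already handles \emph{centered} arcs, up to an arbitrary closed arc $A$ in the complement of $\sigma(T)$, using nothing beyond the fact that $E$ is a spectral measure together with a little planar geometry.

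First I would dispose of the trivial case $\sigma(T)=\mathbb T$, where $\mathbb T\setminus\sigma(T)=\emptyset$ and there is nothing to prove. Otherwise $\sigma(T)$ is a nonempty closed proper subset of the circle, so $\mathbb T\setminus\sigma(T)$ is open, and the compact, connected set $A$ lies inside a single one of its maximal open subarcs; write that arc as $(\eta',\eta'')$ with $\eta',\eta''\in\sigma(T)$ (in the degenerate case that $\sigma(T)$ is a single point $\{\eta\}$, put $\eta'=\eta''=\eta$ and take the open arc to be $\mathbb T\setminus\{\eta\}$).

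The key geometric observation is that for a point $\zeta$ in the open arc $(\eta',\eta'')$ the distance from $\zeta$ to $\sigma(T)$ equals the distance from $\zeta$ to the nearer of $\eta'$ and $\eta''$; hence a nondegenerate closed subarc of $(\eta',\eta'')$ is centered in the sense of Lemma \ref{lem:Ben0} exactly when it is symmetric about the midpoint $\mu$ of $(\eta',\eta'')$. These symmetric closed subarcs form a nested family whose union is all of $(\eta',\eta'')$, and since $A$ is compact and contained in that union it is contained in a single such arc $A'$. By construction $A'$ is a closed centered arc, $A\subseteq A'$, and $A'\cap\sigma(T)=\emptyset$ (a one-point arc $A$, or the degenerate endpoint configuration, is accommodated by the same enlargement).

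Finally, because $E$ is a spectral measure and $A\subseteq A'$ we have $E(A)=E(A\cap A')=E(A)E(A')$, while Lemma \ref{lem:Ben0} applied to $A'$ gives $E(A')V_\ell=0$; therefore
\[
  E(A)V_\ell=E(A)E(A')V_\ell=0
\]
for both $\ell$, which is the assertion. The only step requiring any care is the elementary circle geometry — that every closed arc contained in the \emph{open} complement of $\sigma(T)$ is swallowed by a centered arc still disjoint from $\sigma(T)$ — together with the bookkeeping for the degenerate configurations noted above; I do not expect any genuine analytic obstacle.
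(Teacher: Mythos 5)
Your argument is correct and is essentially the paper's own proof: enclose the closed arc $A$ in a closed centered arc disjoint from $\sigma(T)$ and then use Lemma \ref{lem:Ben0} together with monotonicity of the spectral measure ($E(A)\preceq E(A')$, equivalently $E(A)=E(A)E(A')$). The only difference is that you spell out the elementary circle geometry (symmetrizing about the midpoint of the maximal complementary open arc) that the paper asserts without proof.
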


\begin{proof}
  Any such arc $A$ is contained in a closed centered arc $I$ disjoint from 
  $\sigma(T)$. Since $E(A)\preceq E(I)$ and, by Lemma \ref{lem:Ben0},
  $E(I)V_\ell=0,$ the conclusion of the lemma follows.
\end{proof}

\begin{lemma} 
\label{lem:Ben2}
  Suppose $A_1\subset A_2\subset$ is an increasing sequence of Borel subsets of 
 $\mathbb T$ and let  $A=\cup_j A_j$. 
  If $E(A_j)V_\ell=0$ for all $j$ and $\ell=0,1,$ then $E(A)V_\ell=0.$
\end{lemma}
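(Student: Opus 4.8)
The plan is to exploit the strong continuity of the spectral measure $E$ together with the hypothesis that $V_\ell$ annihilates each $E(A_j)$. First I would recall that for the increasing union $A=\cup_j A_j$, the projections $E(A_j)$ increase to $E(A)$ in the strong operator topology; equivalently, for every vector $x$ in the Hilbert space on which $U$ acts, $\|E(A_j)x - E(A)x\|^2 = \langle (E(A)-E(A_j))x,x\rangle \to 0$, since $E(A)-E(A_j) = E(A\setminus A_j)$ is a projection and $\mu_x(A\setminus A_j)\to 0$ by countable additivity of the finite positive measure $B\mapsto \langle E(B)x,x\rangle$.

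Next, fix $\ell\in\{0,1\}$ and an arbitrary $h\in H$. Applying the above with $x = V_\ell h$ gives $E(A_j)V_\ell h \to E(A)V_\ell h$ in norm. But by hypothesis $E(A_j)V_\ell h = 0$ for every $j$, so the limit $E(A)V_\ell h$ is $0$ as well. Since $h$ was arbitrary, $E(A)V_\ell = 0$. That is the entire argument; it is essentially a one-line consequence of strong continuity of spectral measures along monotone sequences of sets.

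I do not anticipate a genuine obstacle here: the only point requiring a moment's care is the justification that $E(A_j)\to E(A)$ strongly, which is standard and follows from the regularity/countable additivity of the scalar spectral measures $\langle E(\cdot)x,x\rangle$. One should note that this lemma is the tool that, combined with Lemma \ref{lem:Ben1}, will upgrade the vanishing of $E(A)V_\ell$ from closed arcs in the complement of $\sigma(T)$ to the full open complement $\mathbb T\setminus\sigma(T)$ (which is a countable increasing union of such closed arcs), thereby identifying the spectral projection $I-P$ of $U$ over $\sigma(T)$ as containing the ranges of $V_0$ and $V_1$ and yielding $\sigma(W)\subset\sigma(T)$; the reverse inclusion and the intertwining \eqref{eq:spectral} then complete the proof of Proposition \ref{prop:spectral}.
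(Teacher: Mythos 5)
Your argument is correct and is essentially the paper's proof: both rest on the fact that $E(A_j)$ increases to $E(A)$ in the strong operator topology (the paper phrases this via the supremum of the projections $E(A_j)$ equaling $E(A)$), so that $E(A_j)V_\ell \to E(A)V_\ell$ strongly and the hypothesis forces $E(A)V_\ell=0$. Your extra justification of the SOT convergence via countable additivity of the scalar measures $\langle E(\cdot)x,x\rangle$ is a fine, slightly more detailed version of the same step.
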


\begin{proof}
 Consider the supremum $P$ of the projections $P_j=E(A_j).$
 Because $E$ is a spectral measure, $P=E(A)$.
 Since $P_j$ converges SOT (strong operator topology) to $P$,
 it follows that $P_j V_\ell$ converges to $PV_\ell$.
  Thus $PV_\ell =0$.
\end{proof}

 Now let $A$ be an open arc in the complement of $\sigma(T)$.  Since $A$ can be written as a disjoint union
 of closed arcs satisfying the hypothesis of Lemma \ref{lem:Ben1} it follows that 
  $E(A)V_\ell = 0$ for $\ell =1,2$ by Lemma \ref{lem:Ben2}. 

 Finally, $B=\mathbb T\setminus \sigma(T)$  is the disjoint union of 
 open arcs $\{A_j\}.$ 
 In particular, $E(A_j)V_\ell =0$ for each $j$ and $\ell$. Thus, another
 application of Lemma \ref{lem:Ben2} gives 
\[
 E(B) V_\ell = 0.
\]

 Let $P=E(B)$. The operator  $\fU=(I-P)U(I-P)$ is unitary and
\[
 \mathfrak{J} = \begin{pmatrix} \fU & c\fU \\ 0 & \fU \end{pmatrix}
\]
 has the form in Equation \eqref{eq:Jordanc}. Moreover,
 $\sigma(\fU) \subset \sigma(T)$ and hence $\sigma(\mathfrak{J})\subset \sigma(T)$ also.
 Of course,
\[
VT=  JV =  \mathfrak{J} V
\]
 too.

 It remains to show that $\sigma(\mathfrak{J})\supset \sigma(T)$.  To this end,
 suppose that $\lambda\in\mathbb T$ and $\mathfrak{J}-\lambda$ is invertible. 
  Let $L= V^*(\mathfrak{J}-\lambda)^{-1}V$ and observe that
  $L(T-\lambda)=I$ so that $T-\lambda$ is left invertible. On the other hand,
 choosing a sequence $\lambda_n$ not on the circle $\mathbb T$ but converging
  to $\lambda$ it follows that $(\mathfrak{J}-\lambda_n)^{-1}$ is bounded
  and converges to $(\mathfrak{J}-\lambda)^{-1}$.  Since
 $(T-\lambda_n)^{-1} =V^*(\mathfrak{J}-\lambda_n)^{-1})$, it follows 
  that $(T-\lambda_n)^{-1}$ converges to the left inverse $L$. 
  Thus $T-\lambda$ is invertible (and its inverse is $L$).

\subsection{More on the holomorphic functional calculus}
 \label{sec:funcalc}
  Let $\Omega$ denote an open simply connected subset of the plane. 
  Given an operator $T$ whose spectrum lies in $\Omega$ and a function
  $g$ holomorphic on $\Omega$, the operator $g(T)$ can be defined by
  the holomorphic (Riesz) functional calculus.  Moreover, if $T$ is normal,
  then so is $g(T)$. 
  Further, by Runge's Theorem, there exists a sequence of polynomials
  $p_n$ such that $p_n$ converges to $G$ uniformly on compact subsets of $\Omega$. 
  Thus by standard properties of the functional calculus, $p_n(T)$ converges to $g(T)$. 
  Likewise, $p_n^\prime$ converges uniformly to $g^\prime$ and that $p_n^\prime(T)$
  converges to $g^\prime(T)$. 

  In the special case that $\sigma(T)\subset \overline{\mathbb D} \subset \Omega$,
  the function $g$ has a power series expansion whose partial sums $(s_n)$  converges uniformly on compact subsets
  of $\Omega$. In particular, $s_n(T)$ converges to $g(T)$. 

   For $J$ as in Equation \eqref{eq:Jordanc} with $\sigma(U)$
  a subset of $\Omega$  and $p$ a polynomial, a simple calculation shows,
\[
  p(J) =\begin{pmatrix} p(U) & U p^\prime(U) \\ 0 & p(U) \end{pmatrix}.
\]
 In particular,
\begin{equation}
 \label{eq:fun-calc-J}
 g(J) = \lim p_n(J) = \begin{pmatrix} g(U) & U g^\prime(U) \\ 0 & g(U) \end{pmatrix}.
\end{equation}

   Likewise, if $A$ is selfadjoint with spectrum in $\Omega$, then with
\[
 \ttJ = \begin{pmatrix} A & cI \\ 0 & A \end{pmatrix},
\]
 we have
\[
  g(\ttJ) =\begin{pmatrix} g(A) & c g^\prime(A) \\ 0 & g(A) \end{pmatrix}.
\]
  Further, if $g(\sigma(A))\subset \mathbb T$, then $g(A)$ is normal with spectrum
  in the unit circle and is thus unitary. Consequently, $g(\ttJ)$ takes the
  form in Equation \eqref{eq:Jordanc}.

  Let $G$ denote the mapping $G(z)=\exp(iz)$, suppose
  $[a,b]$ of length strictly less than $2\pi$ and let 
  $S=G([a,b])$. In particular, $S$ is a proper subset of the unit circle $\mathbb T$.
  There exists open simply connected sets $\Omega$ and $\Omega_*$ containing
  $[a,b]$ and $S$ respectively such that $G:\Omega\to \Omega_*$ is bianalytic.
  If $\tT$ is an operator with spectrum in $[a,b]$, then $G(\tT)$ is well defined
  and has, by the spectral mapping theorem, its spectrum in $S$.  Letting
  $H$ denote the inverse of the mapping $G:\Omega\to\Omega_*$, the composition
  property of the holomorphic functional calculus implies that $H(G(\tT))=\tT$.

   Now suppose that $\tT$ is a $3$-symmetric operator with spectrum in $[a,b]$.
   In this case 
\[
  T=G(\tT)=\exp(i\tT)
\]
 is a $3$-isometry since $T^n=\exp(in\tT)$  for natural numbers $n$. Moreover,
 the spectrum of $T$ is a proper subset of the unit circle.

\section{$3$-Symmetric Operators}
 \label{sec:3symmetrics}
 Fix a $3$-symmetric operator $\tT\in B(H)$. 
 For a real numbers $s$ and  $t$,
\[
 \exp(ist\tT)^* \exp(ist\tT) = 
   I + s t \tB_1(\tT^*,\tT) + s t^2 B_2(\tT^*,\tT).
\]
 Thus $t\tT$ is also a $3$-symmetric operator and
\[
  B_j((t\tT)^*, t\tT)  = t^j \tB_j(\tT^*,\tT).
\]
 Let $c^2 = \|\tB_2(\tT^*,\tT)\|.$

  Choose a $t_0>0$ such that $\sigma(t_0\tT)$ is a subset
 of an interval $[a,b]$ of length less than $2\pi$. 
 Let 
\[
 S= \{\exp(is): s\in [a,b]\} \subsetneq \mathbb T.
\]
  Let $\Omega,$ $\Omega_*$ and $G$ be as at the end of
  Subsection \ref{sec:funcalc}.
 In this case
\[
  T=\exp(it_0\tT) = G(T)
\]
 is a $3$-isometry with spectrum contained in $S.$     Moreover,
\[
  B_2(T^*,T) = t_0^2 \tB_2(\tT^*,\tT)
\]
 and thus,
\[
 (t_0 c)^2 = \|B_2(T^*,T)\|.
\]
 
   By Proposition \ref{prop:spectral},  there is a  Hilbert space $\cE$
  and a unitary operator $W\in B(\cE)$, an isometry 
  $V:H\to \cE\oplus \cE$ such that 
  $\sigma(T)=\sigma(W)$ such that Equation \eqref{eq:spectral}
  holds with $t_0c$ in place of $c$. Thus with
\[
   J= \begin{pmatrix} W & ct_o W \\ 0 & W\end{pmatrix},
\] 
  $VT=JV$.  Hence, as $G^{-1}$ is analytic in a neighborhood of the
 spectrum of $J$,
\[
   t_0 V\tT =  V  G^{-1}(T) = G^{-1}(J) V 
    = \begin{pmatrix} G^{-1}(W) & ct_0 W (G^{-1})^\prime(W) \\ 0 & G^{-1}(W) \end{pmatrix}.
\]
   Let $A=G^{-1}(W)$ and note that $(G^{-1})^\prime(W)= -i W^*$. Thus, with
\[
  \ttJ = \frac{1}{t_0} \begin{pmatrix} A & -i c t_0 \\ 0 & A \end{pmatrix},
\]
  $V\tT  = \ttJ V$
 and most of the following Theorem of Helton and Agler is established.

\begin{theorem}[\cite{aglerthreesym}\cite{heltonbulletin}\cite{heltonpaper}\cite{heltonpapererata}\cite{ballhelton}]
  If $\tT\in B(H)$ is a $3$-symmetric operator, but not selfadjoint, then
  $\tB_2(\tT^*,\tT)\neq 0$.  In this case, with $c=\|\tB_2(\tT^*,\tT)\|$, 
\begin{equation}
 \label{eq:existc}
 \exp(is\tT)^* \exp(is\tT) - \frac{\tB_2(\tT^*,\tT)}{c^2} \succeq 0
\end{equation}
 for all $s$. Moreover,  there exists a Hilbert space $\cE,$
 a selfadjoint operator $A\in B(\cE)$ and an isometry $V:H\to \cE\oplus \cE$ such that
\[
 VT = \begin{pmatrix} A & -i c\\ 0 & A\end{pmatrix}.
\]
\end{theorem}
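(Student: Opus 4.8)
The plan is to supply the two ingredients on which the construction already carried out in this section rests: that $\tilde B_2(\tilde T^{*},\tilde T)\neq 0$, so that $c>0$ (where $c^{2}=\|\tilde B_2(\tilde T^{*},\tilde T)\|$) and the class $\cFc$ is meaningful, and the positivity \eqref{eq:existc}, which is what puts $T:=\exp(it_0\tilde T)$ into that class. For the first: if $\tilde B_2(\tilde T^{*},\tilde T)=0$ then $\exp(is\tilde T)^{*}\exp(is\tilde T)=I+s\tilde B_1(\tilde T^{*},\tilde T)$ for all real $s$, and positivity of the left-hand side together with the behavior as $s\to\pm\infty$ forces $\tilde B_1(\tilde T^{*},\tilde T)=0$; then $\exp(is\tilde T)$ is a surjective isometry, hence unitary, for every real $s$, so $\exp(-is\tilde T^{*})=\exp(is\tilde T)^{*}=\exp(is\tilde T)^{-1}=\exp(-is\tilde T)$, and differentiating at $s=0$ gives $\tilde T^{*}=\tilde T$, a contradiction.

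For \eqref{eq:existc} I would exploit the semigroup identity $\exp(i(s+t)\tilde T)=\exp(is\tilde T)\exp(it\tilde T)$. Computing $\exp(i(s+t)\tilde T)^{*}\exp(i(s+t)\tilde T)$ in two ways --- directly from \eqref{eq:3sym} with parameter $s+t$, and as $\exp(it\tilde T)^{*}\bigl(I+s\tilde B_1(\tilde T^{*},\tilde T)+s^{2}\tilde B_2(\tilde T^{*},\tilde T)\bigr)\exp(it\tilde T)$ --- yields, for each fixed $t$, two polynomials in $s$ that agree; equating the coefficients of $s^{2}$ gives
\[
\exp(it\tilde T)^{*}\,\tilde B_2(\tilde T^{*},\tilde T)\,\exp(it\tilde T)=\tilde B_2(\tilde T^{*},\tilde T)\qquad\text{for all real }t ,
\]
while $s^{-2}\exp(is\tilde T)^{*}\exp(is\tilde T)\to\tilde B_2(\tilde T^{*},\tilde T)$ as $s\to\infty$ shows $\tilde B_2(\tilde T^{*},\tilde T)\succeq 0$. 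Since $\tilde B_2(\tilde T^{*},\tilde T)$ is selfadjoint, $\tilde B_2(\tilde T^{*},\tilde T)\preceq c^{2}I$; conjugating this inequality by $\exp(it\tilde T)$ and invoking the displayed identity gives $\tilde B_2(\tilde T^{*},\tilde T)\preceq c^{2}\exp(it\tilde T)^{*}\exp(it\tilde T)$, which is exactly \eqref{eq:existc}.

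With \eqref{eq:existc} in hand, the reduction runs as described above. Putting $s=t_0r$ identifies $Q(T,r)$ with $\exp(it_0r\tilde T)^{*}\exp(it_0r\tilde T)-c^{-2}\tilde B_2(\tilde T^{*},\tilde T)\succeq 0$, so $T=\exp(it_0\tilde T)$ is an invertible member of $\cFc$ with parameter $t_0c$; Proposition \ref{prop:spectral} then supplies a unitary $W$ with $\sigma(W)=\sigma(T)$ and an isometry $V$ with $VT=\mathfrak J V$, where $\mathfrak J=\begin{pmatrix}W & t_0cW\\0 & W\end{pmatrix}$. The relation $VT=\mathfrak J V$ propagates to $Vf(T)=f(\mathfrak J)V$ for every $f$ holomorphic near $\sigma(T)=\sigma(W)=\sigma(\mathfrak J)\subset S\subset\Omega_*$, by approximating $f$ uniformly by polynomials (Runge), as in Subsection \ref{sec:funcalc}. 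Taking $f=G^{-1}$, so that $G^{-1}(T)=t_0\tilde T$, and using the computation behind \eqref{eq:fun-calc-J} together with $(G^{-1})'(W)=-iW^{*}$ --- so that the off-diagonal entry of $G^{-1}(\mathfrak J)$ is $t_0c\,W(G^{-1})'(W)=-it_0c\,I$ --- one obtains
\[
t_0\,V\tilde T=G^{-1}(\mathfrak J)\,V=\begin{pmatrix}G^{-1}(W) & -it_0c\,I\\0 & G^{-1}(W)\end{pmatrix}V .
\]
Here $A:=t_0^{-1}G^{-1}(W)$ is selfadjoint, being normal with spectrum in $G^{-1}(S)\subset\mathbb R$; dividing through by $t_0$ gives the asserted intertwining $V\tilde T=\begin{pmatrix}A & -ic\\0 & A\end{pmatrix}V$.

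The crux is \eqref{eq:existc}: a generic $3$-isometry belongs to no class $\cFc$, so this positivity cannot be had for free --- it must be drawn out of the $3$-symmetric structure, and the $s^{2}$-coefficient identity $\exp(it\tilde T)^{*}\tilde B_2(\tilde T^{*},\tilde T)\exp(it\tilde T)=\tilde B_2(\tilde T^{*},\tilde T)$ is precisely the extraction that does it; everything else is routine given the earlier sections --- the invertible $3$-isometry case is Proposition \ref{prop:spectral}, and the passage from $\mathfrak J$ back to its preimage under the exponential is the holomorphic functional calculus of Subsection \ref{sec:funcalc}.
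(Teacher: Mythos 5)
Your proposal is correct and takes essentially the same route as the paper: the coefficient-matching identity $\exp(is\tT)^*\,\tB_2(\tT^*,\tT)\,\exp(is\tT)=\tB_2(\tT^*,\tT)$ yields \eqref{eq:existc}, which places $T=\exp(it_0\tT)$ in $\mathcal{F}_{t_0c}$, and Proposition \ref{prop:spectral} together with the functional-calculus pullback through $G^{-1}$ (with $(G^{-1})^\prime(W)=-iW^*$) gives the Jordan intertwining, exactly as in Section \ref{sec:3symmetrics}. Your additional argument that $\tB_2(\tT^*,\tT)=0$ would force $\tT$ to be selfadjoint — a point the paper states but does not prove — is sound, but it does not alter the overall approach.
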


 All that  remains to be proved is the inequality of  Equation \eqref{eq:existc}.
 To this end let $\tB_j = \tB_j(\tT^*,\tT)$, and note
\[
 \exp(is\tT)^* [I+t\tB_1 + t_2 \tB_2]\exp(is\tT) 
   = \exp(i(s+t)\tT)^* \exp(i(s+t)\tT) 
    = I + (s+t) \tB_1 +(s+t)^2 \tB_2,
\]
 from  which follows that
\[
  \exp(is\tT)^* \tB_2 \exp(is\tT)  = \tB_2.
\]
 Thus, with  $c^2=2\|\tB_2(\tT^*,\tT)\|$,
\[
 \exp(is\tT)^* \exp(is\tT) - \frac{\tB_2(\tT^*,\tT)}{c^2} =
   \exp(-is\tT^*)[I-\frac{\tB_2(\tT^*,\tT)}{c^2} ]
   \exp(is\tT) \succeq 0. 
\]

 Some of the results in this article were part of the
 first (alphabetically) listed authors research during
 his PhD studies at the University of California, San Diego,
 under the direction of Jim Agler.

\printindex


\begin{thebibliography}{99}

\bibitem{aglerhereditary}
 Agler, Jim,
 {\it The Arveson extension theorem and coanalytic models,}
  Integral Equations Operator Theory {\bf 5} (1982), no. 5, 608–631. 

\bibitem{aglerthreesym}
Agler, Jim,
 {\it SubJordan Operators,} PhD Thesis, Indiana University, 1980.

\bibitem{aglerstankusi}
 Agler, Jim; Stankus, Mark,
 {\it  $m$-isometric transformations of Hilbert space. II,}
  Integral Equations Operator Theory {\bf 23} (1995), no. 1, 1–48. 

\bibitem{aglerstankusii}
  Agler, Jim; Stankus, Mark,
 {\it $m$-isometric transformations of Hilbert space. I,}
   Integral Equations Operator Theory {\bf 21} (1995), no. 4, 383–429. 

\bibitem{aglerstankusiii}
Agler, Jim; Stankus, Mark,
{\it $m$-isometric transformations of Hilbert space. III.}
  Integral Equations Operator Theory {\bf 24} (1996), no. 4, 379–421. 

\bibitem{ballhelton}
 Ball, Joseph A.; Helton, J. William,
 {\it  Nonnormal dilations, disconjugacy and constrained spectral factorization,}
    Integral Equations Operator Theory {\bf 3} (1980), no. 2, 216–309.

\bibitem{heltonbulletin}
   Helton, J. William,
 {\it  Jordan operators in infinite dimensions and Sturm Liouville conjugate point theory,}
    Bull. Amer. Math. Soc. {\bf 78} (1971) 57–61.

\bibitem{heltonpapererata}
  Helton, J. William,
 {\it  Operators with a representation as multiplication by $×$ on a Sobolev space,}
  Hilbert space operators and operator algebras (Proc. Internat. Conf., Tihany, 1970),  279–287.

\bibitem{heltonpaper} Helton, J. William,
 {\it Infinite dimensional Jordan operators and Sturm-Liouville conjugate point theory,}
    Trans. Amer. Math. Soc. {\bf 170} (1972), 305–331. 

\bibitem{mccullough}
McCullough, Scott,
{\it Sub-Brownian operators,}
J. Operator Theory {\bf 22} (1989), no. 2, 291–305. 

\bibitem{paulsenbook}
 Paulsen, Vern,
 {\it Completely bounded maps and operator algebras,}
   Cambridge Studies in Advanced Mathematics, 78. Cambridge University Press, Cambridge, 2002.

\bibitem{richter}
 Richter, Stefan,
 {\it A representation theorem for cyclic analytic two-isometries,}
    Trans. Amer. Math. Soc. {\bf 328} (1991), no. 1, 325–349. 

\bibitem{rosenblum}
 Rosenblum, Marvin; Rovnyak, James,
 {\it  Hardy classes and operator theory,}
   Oxford Mathematical Monographs. Oxford Science Publications. The Clarendon Press, Oxford University Press, New York, 1985.

\end{thebibliography}
\end{document}